\documentclass{amsart}
\usepackage{amssymb,latexsym}
\usepackage{amscd,amsthm}

\usepackage[all]{xy}
\usepackage{tikz}

\usepackage{tikz-cd}

\newtheorem{theorem}{Theorem}[section]

\newtheorem{lemma}[theorem]{Lemma}
\newtheorem{proposition}[theorem]{Proposition}
\newtheorem{corollary}[theorem]{Corollary}

\theoremstyle{definition}
\newtheorem{definition}[theorem]{Definition}

\newtheorem*{remark}{Remark}

\newtheorem{example}[theorem]{Example}

\newtheorem*{setup}{Set up}

\DeclareMathOperator{\Ext}{Ext}
\DeclareMathOperator{\Hom}{Hom}
\DeclareMathOperator{\Tor}{Tor}

\newcommand{\cat}[1]{\mathcal{#1}}           

\newcommand{\tensor}{\otimes}

\newcommand{\class}[1]{\mathcal{#1}}   

\newcommand{\Z}{\mathbb{Z}}
\newcommand{\Q}{\mathbb{Q/Z}}

\newcommand{\ch}{\textnormal{Ch}(R)}

\newcommand{\tilclass}[1]{\widetilde{\class{#1}}}

\newcommand{\dwclass}[1]{dw\widetilde{\class{#1}}}

\newcommand{\rightperp}[1]{#1^{\perp}}
\newcommand{\leftperp}[1]{{}^\perp #1}

\begin{document}

\title{Duality pairs and stable module categories}

\author{James Gillespie}
\thanks{2010 Mathematics Subject Classification. 13D07, 18G25, 55U35}
\address{Ramapo College of New Jersey \\
         School of Theoretical and Applied Science \\
         505 Ramapo Valley Road \\
         Mahwah, NJ 07430}
\email[Jim Gillespie]{jgillesp@ramapo.edu}
\urladdr{http://pages.ramapo.edu/~jgillesp/}

\date{\today}

\begin{abstract}
Let $R$ be a commutative ring. We show that any complete duality pair gives rise to a theory of relative homological algebra, analogous to Gorenstein homological algebra. Indeed Gorenstein homological algebra over a commutative Noetherian ring of finite Krull dimension can be recovered from the duality pair $(\class{F},\class{I})$ where $\class{F}$ is the class of flat $R$-modules and $\class{I}$ is the class of injective $R$-modules. For a general $R$, the AC-Gorenstein homological algebra of Bravo-Gillespie-Hovey is the one coming from the duality pair $(\class{L},\class{A})$ where $\class{L}$ is the class of level $R$-modules and $\class{A}$ is class of absolutely clean $R$-modules.  Indeed we show here that the work in~\cite{bravo-gillespie-hovey} can be extended to obtain similar abelian model structures on $R$-Mod from any a complete duality pair $(\class{L},\class{A})$. It applies in particular to the original duality pairs constructed by Holm-J\o rgensen.
\end{abstract}

\maketitle

\section{Introduction}

The goal of this paper is to show how Gorenstein homological algebra can be done with respect to a duality pair. This comes from extracting and generalizing the core idea behind the main results in~\cite{bravo-gillespie-hovey}. 

Let $R$ be a commutative ring with identity.  The notion of a duality pair of $R$-modules was introduced by Holm and J\o rgensen in~\cite{holm-jorgensen-duality}. The notion is very useful in relative homological algebra because, as shown in~\cite[Theorem~3.1]{holm-jorgensen-duality}, duality pairs are very closely related to purity, existence of covers and envelopes, and to complete cotorsion pairs. Although the language did not quite agree, duality pairs were used by Bravo-Gillespie-Hovey in~\cite[Theorem~A.6]{bravo-gillespie-hovey} to prove the equivalence of two different types of acyclicity that can be associated to chain complexes of projective $R$-modules. In this paper we merge these two notions of duality pair, getting what we call a \emph{complete duality pair}, and show that an entire theory of Gorenstein homological algebra, complete with associated abelian model structures with stable homotopy categories, can be done with respect to a complete duality pair. Indeed the theory of AC-Gorenstein homological algebra introduced in~\cite{bravo-gillespie-hovey} is the special case of having the complete duality pair $(\class{L},\class{A})$ where $\class{L}$ is the class of \emph{level} $R$-modules and $\class{A}$ is the class of \emph{absolutely clean} $R$-modules. Although one could strive to get analogous results for noncommutive rings, the theory for commutative rings worked out in this paper is quite elegant and Holm and J\o rgensen, throughout~\cite{holm-jorgensen-duality}, provide several interesting examples of complete duality pairs over certain commutative Noetherian rings, for which our theorems here apply. See Section~\ref{Sec-duality pairs}.

Let us indicate the relevant definitions and then state more precisely the main results proved in this paper. First, recall that for a given $R$-module $M$, its \emph{character module} is defined to be the $R$-module $M^+ = \Hom_{\Z}(M,\Q)$. A duality pair in the sense of~\cite{holm-jorgensen-duality} is essentially a pair of classes $(\class{M},\class{C})$ such that $M \in \class{M}$ if and only if $M^+ \in \class{C}$; see Definition~\ref{def-duality pair} for the complete definition which involves further closure properties on the classes $\class{M}$ and $\class{C}$. The concept of duality pair used in~\cite{bravo-gillespie-hovey}, and in this paper, requires that $(\class{C},\class{M})$ be a duality pair too. This is incorporated into the definition of \emph{complete duality pair} given in this paper; see Definition~\ref{def-complete duality pair}. The examples of duality pairs given throughout~\cite{holm-jorgensen-duality, bravo-gillespie-hovey, bravo-perez} provide many interesting complete duality pairs; again see Section~\ref{sec-example duality pairs}.

\begin{definition}\label{Defs}
Assume $(\class{L},\class{A})$ is a complete duality pair. Let $M$ be an $R$-module.
\begin{enumerate}
\item  We say $M$ is \textbf{Gorenstein $(\class{L},\class{A})$-injective} if
$M=Z_{0}I$ for some exact complex of injectives $I$ for which $\Hom_R(A,I)$ remains exact for all $A \in \class{A}$. 

\item We say $M$ is \textbf{Gorenstein $(\class{L},\class{A})$-projective} if
$M=Z_{0}P$ for some exact complex of projectives $P$ for which $\Hom_R(P,L)$ remains exact for all $L \in \class{L}$.     

\item We say $M$ is \textbf{Gorenstein $(\class{L},\class{A})$-flat} if
$M=Z_{0}F$ for some exact complex of flat modules $F$ for which $A \otimes_R F$ remains exact for all $A \in \class{A}$.
\end{enumerate}
\end{definition}

Note that with these definitions, a Gorenstein $(\class{L},\class{A})$-projective module $M$ is always Gorenstein $(\class{L},\class{A})$-flat, by Theorem~\ref{them-projectivecomplexes}. We note too that when $R$ is a commutative Noetherian ring of finite Krull dimension, then these definitions, applied to the flat-injective duality pair, agree with the usual definitions of Gorenstein injective, projective and flat modules studied by Enochs and many other authors. See~\cite{enochs-jenda-book} for the basic theory of these modules.

 Recall that a model category is a category satisfying extra axioms that lead to a formal homotopy theory on that category. If the category is abelian this leads to some sort of (relative) homological algebra. For example, Hovey showed that the theory of Gorenstein homological algebra, over a Gorenstien ring $R$, is encoded within abelian model structures on the category $R$-Mod, of $R$-modules. This appeared in~\cite{hovey} where Hovey also made a general study of abelian model categories. One can also see~\cite{gillespie-hereditary-abelian-models} for a recent survey article on abelian model categories. Now, based on ideas from~\cite{bravo-gillespie-hovey} we are able to prove the following results in this paper.

\begin{theorem}\label{thm-main}
Assume $(\class{L},\class{A})$ is a complete duality pair. Let $R$-Mod denote the category of $R$-modules. 
\begin{enumerate}
\item There is a cofibrantly generated abelian model structure on $R$-Mod in which every object is cofibrant and the fibrant objects are the Gorenstein $(\class{L},\class{A})$-injective modules. The trivially fibrant objects are the categorically injective modules.

\item There is a cofibrantly generated abelian model structure on $R$-Mod in which every object is fibrant and the cofibrant objects are the Gorenstein $(\class{L},\class{A})$-projectives. The trivially cofibrant objects are the categorically projective modules.

\item Assume the class of Gorenstein $(\class{L},\class{A})$-flat modules is closed under extensions. Then there is a cofibrantly generated abelian ``mixed'' model structure on $R$-Mod whose cofibrant objects are the Gorenstein $(\class{L},\class{A})$-flat modules and the trivially cofibrant objects are the categorically flat modules. The fibrant objects are the cotorsion modules.  
\end{enumerate}
\end{theorem}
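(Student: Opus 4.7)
The plan is to invoke Hovey's correspondence between abelian model structures on $R$-Mod and \emph{Hovey triples} $(\class{C}, \class{W}, \class{F})$, where $\class{W}$ is a thick class and both $(\class{C}, \class{W} \cap \class{F})$ and $(\class{C} \cap \class{W}, \class{F})$ are complete cotorsion pairs. For each of the three statements I would identify the triple, construct the two cotorsion pairs, and verify thickness and compatibility, following the blueprint of \cite{bravo-gillespie-hovey} for the AC-Gorenstein case. The role of the complete duality pair hypothesis is to provide enough symmetry to bootstrap the constructions used there into this general setting.

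For (1), I would take $\class{C} = R\text{-Mod}$, $\class{F} = $ Gorenstein $(\class{L},\class{A})$-injectives, and $\class{W} = \leftperp{\class{F}}$. The trivial cotorsion pair $(R\text{-Mod}, \text{Inj})$ handles $(\class{C}, \class{W} \cap \class{F})$ once we verify $\class{W} \cap \class{F} = \text{Inj}$, which follows because an $M \in \class{F}$ with $\Ext^1_R(-,M) = 0$ is a summand of a term in its defining exact complex, hence injective. The heart of the matter is producing the complete cotorsion pair $(\class{W}, \class{F})$. I would build this by first establishing a cotorsion pair on $\cha{R}$ whose right class is the class of exact complexes of injectives $I$ with $\Hom_R(A, I)$ exact for all $A \in \class{A}$ (the complexes featured in Definition~\ref{Defs}(1)), then descending to $R$-Mod via the cycles functor using Gillespie's correspondence between cotorsion pairs on complexes and on modules. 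Completeness on complexes is obtained by exhibiting a cardinality-bounded generating set (a Hill-lemma / Kaplansky-type bound, using that $\class{A}$ behaves well with respect to pure submodules by the definition of a duality pair) and applying Eklof--Trlifaj. Thickness of $\class{W}$ is then a standard consequence of the characterization $\class{W} = \leftperp\class{F}$ together with $\class{F}$ being resolving in the appropriate sense.

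For (2), the setup is dual: $\class{F} = R\text{-Mod}$, $\class{C} = $ Gorenstein $(\class{L},\class{A})$-projectives, $\class{W} = \rightperp\class{C}$, with $(\text{Proj}, R\text{-Mod})$ handling one half. I would run the analog of the chain-complex argument above, but now for exact complexes of projectives $P$ with $\Hom_R(P, L)$ exact for all $L \in \class{L}$; the completeness-of-duality-pair condition lets one swap between $\Hom_R(P,L)$-acyclicity and tensor-acyclicity with $\class{A}^+$ via character-module adjunction, which is what powers the Bravo--Gillespie--Hovey equivalence of acyclicities and transports here essentially verbatim. For (3), the classical flat cotorsion pair $(\text{Flat}, \text{Cot})$ of Bican--El Bashir--Enochs gives $(\class{C} \cap \class{W}, \class{F})$, so it suffices to produce a complete cotorsion pair $(\text{Gor-}(\class{L},\class{A})\text{-Flat}, \class{V})$ with $\class{W} := \rightperp(\text{Cot}) \cdots$ handled thickly. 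Here the extra hypothesis that Gorenstein $(\class{L},\class{A})$-flats be closed under extensions is used precisely to conclude the left-hand class of a cotorsion pair, since unlike the injective/projective cases this is not automatic from the defining exact complexes.

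The main obstacle, in all three parts, is completeness of the nontrivial cotorsion pair, and specifically the construction of the small generating set that makes Eklof--Trlifaj applicable. In the AC case of \cite{bravo-gillespie-hovey} this leverages detailed cardinality bounds on level and absolutely clean modules; translating those bounds into the abstract setting requires extracting from Definition~\ref{def-complete duality pair} exactly the closure properties (under pure submodules, pure quotients, products, and coproducts) that allow a Hill-type argument on complexes. A secondary technical point is verifying, after descent, that the derived class on modules coincides with Gorenstein $(\class{L},\class{A})$-injectives (respectively projectives/flats) in the sense of Definition~\ref{Defs}; this requires knowing that every such module admits a ``good'' exact resolution, which is itself a consequence of the completeness of the chain-complex cotorsion pair just constructed, closing the loop.
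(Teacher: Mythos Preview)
Your outline for parts (1) and (2) is essentially the paper's approach: build the model structure on $\ch$ whose fibrant (resp.\ cofibrant) objects are the exact $\class{A}$-acyclic complexes of injectives (resp.\ projectives), then descend to $R$-Mod. The paper does the descent a bit more concretely than ``Gillespie's correspondence'': it takes a fibrant replacement $0 \to S^0(M) \to I \to X \to 0$ in the complex model structure and applies the snake lemma on cycles to obtain $0 \to M \to Z_0 I \to Z_0 X \to 0$, then checks $Z_0 X \in \class{W}$ via a filtration argument on $X$. But this is the same idea you sketch, and your remarks about cardinality bounds and Eklof--Trlifaj correctly identify where the work lies.

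Part (3), however, has a genuine gap. You correctly identify the two cotorsion pairs $(\class{F},\class{C})$ and $(\class{GF},\class{GC})$, and you are right that the extension-closure hypothesis is what makes the latter a cotorsion pair (the paper cites Estrada--Iacob--P\'erez for this). But having two complete cotorsion pairs is not enough for a Hovey triple: you must produce a \emph{single} thick class $\class{W}$ with $\class{GF}\cap\class{W}=\class{F}$ and $\class{W}\cap\class{C}=\class{GC}$. Your sentence ``$\class{W} := \rightperp{(\text{Cot})}\cdots$ handled thickly'' does not specify $\class{W}$, and there is no automatic way to extract it from the two cotorsion pairs alone. The paper's key move is to take $\class{W}=\rightperp{\class{GP}}$, the \emph{same} trivial class already constructed in part~(2); thickness is then free. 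The nontrivial step is the equality $\class{GF}\cap\rightperp{\class{GP}}=\class{F}$, and for this the paper invokes Neeman's theorem that $\dwclass{F}\cap\rightperp{(\dwclass{P})}=\tilclass{F}$: given $M\in\class{GF}\cap\rightperp{\class{GP}}$ with $M=Z_0F$, one resolves $F$ by a complex of projectives $P$ with kernel in $\rightperp{(\dwclass{P})}$, uses Neeman to see this kernel has flat cycles, checks $P$ is exact $\class{A}$-acyclic so $Z_0P\in\class{GP}$, and concludes the cycle sequence splits. Without this identification of $\class{W}$ and the Neeman-based argument, your plan for~(3) does not close.
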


We point out that the homotopy categories of these model structures are well generated triangulated categories, in the sense of Neeman~\cite{neeman-well generated}. This is because they are \emph{hereditary} abelian model structures (so the homotopy categories are stable) and they are \emph{cofibrantly generated} (so the homotopy categories are well generated). 

We also point out that since the trivial objects in the projective and flat model structures coincide, they give two different models, in general, for the same stable module category. The class of trivial objects in this case contains all $R$-modules in $\class{L}$, so in particular all projective $R$-modules, but also all injective $R$-modules are trivial. On the other hand, for the model structure in (1), the class of trivial objects contains all $R$-modules in $\class{A}$, so in particular all injective $R$-modules, but also all projective $R$-modules are trivial.

This paper relies heavily on the work of~\cite{bravo-gillespie-hovey}. Although we give a few shortened proofs, the focus here is on the amazing amount of homotopy theory that follows automatically from the existence of a complete duality pair.

We refer the reader to~\cite{enochs-jenda-book} for the language of cotorsion pairs we will use, and to~\cite{hovey} and~\cite{gillespie-hereditary-abelian-models} for the interaction between cotorsion pairs and abelian model strucures.

\

\noindent \textbf{Acknowledgement:} The author wishes to thank Marco P\'erez for providing comments and feedback on an early draft of this paper.

\section{Complete duality pairs}\label{Sec-duality pairs}  

In this section we recall the original definition of duality pair, due to Holm and J\o rgensen. We then define symmetric duality pairs and complete duality pairs in order to merge their notions with the one from~\cite{bravo-gillespie-hovey}. We also list several interesting examples of duality pairs for which the results in this paper apply.

First, recall that for a given $R$-module $M$, its \emph{character module} is defined to be the $R$-module $M^+ = \Hom_{\Z}(M,\Q)$. 

\begin{definition}\cite[Definition~2.1]{holm-jorgensen-duality}\label{def-duality pair}
A \textbf{duality pair} over $R$ is a pair $(\class{M},\class{C})$, of classes of $R$-modules, satisfying (i) $M \in \class{M}$ if and only if $M^+ \in \class{C}$, and (ii) $\class{C}$ is closed under direct summands and finite direct sums. A duality pair $(\class{M},\class{C})$ is called \textbf{perfect} if $\class{M}$  contains the module $R$, and is closed under coproducts and extensions. 
\end{definition}

\begin{theorem}\cite[Theorem~3.1]{holm-jorgensen-duality}\label{them-duality pair purity}
Let $(\class{M},\class{C})$ be a duality pair. Then the following hold: 
\begin{enumerate}
\item $\class{M}$ is closed under pure submodules, pure quotients, and pure extensions. 
\item If $(\class{M},\class{C})$ is perfect, then $(\class{M}, \rightperp{\class{M}})$ is a perfect cotorsion pair.  
\end{enumerate}
\end{theorem}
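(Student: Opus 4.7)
The plan is to prove part (1) by exploiting the well-known interaction between character modules and pure exactness, and then use part (1) plus the Eklof--Trlifaj theorem to generate the cotorsion pair in part (2) by a set.

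For part (1), the key input is the natural isomorphism $\Hom_R(M, B^+) \cong (B \tensor_R M)^+$, from which it follows that a short exact sequence $0 \to A \to B \to C \to 0$ is pure exact if and only if the dual sequence $0 \to C^+ \to B^+ \to A^+ \to 0$ is split exact. Given a pure exact sequence we therefore get $B^+ \cong A^+ \oplus C^+$. Now the three closure properties fall out immediately from the defining biconditional ``$M \in \class{M}$ iff $M^+ \in \class{C}$'' together with the closure hypotheses on $\class{C}$: if $B \in \class{M}$ then $B^+ \in \class{C}$, and since $\class{C}$ is closed under direct summands we get $A^+, C^+ \in \class{C}$, hence $A, C \in \class{M}$; conversely if $A, C \in \class{M}$ then $A^+, C^+ \in \class{C}$ so $A^+ \oplus C^+ \cong B^+ \in \class{C}$ by closure under finite direct sums, giving $B \in \class{M}$.

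For part (2), I would first observe that $\class{M}$ is closed under direct limits. Indeed every direct limit of modules can be written as the cokernel of a pure monomorphism between two coproducts of the given system; so closure of $\class{M}$ under coproducts (from perfectness) and under pure quotients (from part (1)) forces closure under direct limits. In particular, since $R \in \class{M}$, every flat module and hence every projective module lies in $\class{M}$, and $\class{M}$ is closed under direct summands (as summands are pure submodules).

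Next I would cogenerate the cotorsion pair by a set. Using a standard Löwenheim--Skolem style argument for pure submodules, one can fix a cardinal $\kappa$ (depending only on $|R|$) such that every $M \in \class{M}$ admits a continuous filtration by pure submodules whose successive quotients are in $\class{M}$ and of cardinality at most $\kappa$. Let $\class{S}$ be a representative set of such bounded objects in $\class{M}$. Then $\class{M} \subseteq \leftperp{\rightperp{\class{S}}}$ by the Eklof lemma, and the reverse inclusion follows because elements of $\leftperp{\rightperp{\class{S}}}$ are direct summands of $\class{S}$-filtered modules, all of which are in $\class{M}$ (by extension-closure of $\class{M}$ applied transfinitely, combined with summand-closure). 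Hence $\rightperp{\class{S}} = \rightperp{\class{M}}$, and the Eklof--Trlifaj theorem yields that $(\class{M}, \rightperp{\class{M}})$ is a complete cotorsion pair. Finally I would upgrade completeness to perfectness: since $\class{M}$ is closed under direct limits, Enochs' theorem gives $\class{M}$-covers, and Salce's lemma together with the fact that $\class{M}$ contains a generator (namely $R$) yields $\rightperp{\class{M}}$-envelopes.

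The main obstacle here is the cardinal bound in step two: one must verify that the pure-subobject closure really does propagate to a Hill-type statement producing a filtration by bounded pure submodules whose quotients remain in $\class{M}$. Once that Löwenheim--Skolem argument is in place, the remaining steps are standard Eklof--Trlifaj / Enochs / Salce machinery and pose no further difficulty.
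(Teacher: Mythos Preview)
The paper does not actually prove this theorem: it is stated with attribution to \cite[Theorem~3.1]{holm-jorgensen-duality} and used as a black box, with no accompanying proof environment. So there is no ``paper's own proof'' to compare against.

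That said, your proposal is correct and is essentially the argument Holm and J\o rgensen give. Part~(1) is exactly right: pure exactness of $0 \to A \to B \to C \to 0$ is equivalent to splitting of the dualized sequence, and the closure properties of $\class{C}$ transfer immediately to the claimed pure-closure properties of $\class{M}$. For part~(2), your outline---closure under direct limits from coproduct-closure plus pure-quotient-closure, a L\"owenheim--Skolem/Kaplansky filtration to produce a cogenerating set, Eklof--Trlifaj for completeness, and then Enochs' theorem to upgrade to covers---is the standard route and matches the original. Your identification of the filtration step as the only nontrivial point is accurate; the required statement (every module in a class closed under pure submodules and pure quotients is a transfinite extension of $<\kappa$-presented modules in the class, for suitable $\kappa$) is well known and appears, for instance, as \cite[Proposition~2.8]{bravo-gillespie-hovey}, which the present paper invokes in Corollary~\ref{transfinite}.
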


The following observation will be important for this paper. 

\begin{proposition}\label{prop-flats}
If $(\class{M},\class{C})$ is a perfect duality pair, then $\class{M}$ contains all projective modules, in fact, it contains all flat modules. And, the class $\class{C}$ contains all injective modules.   
\end{proposition}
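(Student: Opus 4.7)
The plan has two parts, one for each assertion.

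For the statement that $\class{M}$ contains all flat modules: since the duality pair is perfect, $R\in\class{M}$ and $\class{M}$ is closed under coproducts, so every free $R$-module lies in $\class{M}$. Now given any flat module $F$, pick an epimorphism $\pi\colon P\twoheadrightarrow F$ from a free module $P$. Because $F$ is flat, $\Tor_1^R(F,-)=0$, which implies that the short exact sequence $0\to\ker\pi\to P\to F\to 0$ is pure exact, so $F$ is a pure quotient of $P$. By Theorem~\ref{them-duality pair purity}(1), $\class{M}$ is closed under pure quotients, hence $F\in\class{M}$.

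For the statement that $\class{C}$ contains all injective modules: fix an injective $R$-module $I$. The plan is to exhibit $I$ as a direct summand of $F^+$ for some free (hence flat) module $F$, after which the definition of a duality pair yields $F^+\in\class{C}$, and closure of $\class{C}$ under direct summands finishes the proof. To build such an $F$, choose a surjection $F\twoheadrightarrow I^+$ from a free module $F$. Applying the exact functor $\Hom_{\Z}(-,\mathbb{Q/Z})$ gives an injection $I^{++}\hookrightarrow F^+$, and composing with the evaluation embedding $I\hookrightarrow I^{++}$ yields an injection $I\hookrightarrow F^+$. Since $I$ is injective, this injection splits, so $I$ is a direct summand of $F^+$.

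Neither step is a real obstacle; the only mildly subtle point is the second one, where one must notice the simple double-dualization trick that embeds any module $I$ into $F^+$ for a free $F$ (namely, dualize any surjection $F\twoheadrightarrow I^+$), and then crucially invoke injectivity of $I$ to promote the embedding to a direct summand decomposition. Together with closure of $\class{C}$ under direct summands, this gives $I\in\class{C}$ and completes the proof.
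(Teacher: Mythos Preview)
Your proof is correct. The second half is essentially the paper's argument in disguise: the paper observes that $R^+$ is an injective cogenerator, so every injective embeds (hence splits off) in some product $\prod R^+ \cong (\bigoplus R)^+$; your double-dual trick produces exactly such an embedding without naming the cogenerator explicitly.

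For the first half, however, you take a genuinely different and more direct route. The paper first gets projectives into $\class{M}$ (as summands of free modules, via closure under pure submodules), then invokes Lazard's theorem to write an arbitrary flat module as a directed colimit of projectives and uses that the canonical map $\bigoplus P_i \twoheadrightarrow \varinjlim P_i$ is a pure epimorphism. You bypass Lazard entirely: any epimorphism from a free module onto a flat module is automatically pure, because vanishing of $\Tor_1$ for the flat quotient forces the kernel to tensor injectively. This is shorter and uses less machinery; the paper's route, on the other hand, explicitly isolates the intermediate fact that $\class{M}$ contains all projectives, which matches the two-clause phrasing of the proposition.
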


\begin{proof}
Since $\class{M}$ contains $R$ and is closed under coproducts, $\class{M}$ contains all free modules. By Theorem~\ref{them-duality pair purity}(1) $\class{M}$ is closed under direct summands (since direct summands are certainly pure submodules). So $\class{M}$ contains all projective modules. 
 
Now given a directed system of modules $\{M_i\}$, it is well known that the canonical morphism $\oplus M_i \twoheadrightarrow \varinjlim M_i$ is a pure epimorphism. Now Lazard's Theorem states that for any flat module $F$ we have $F \cong \varinjlim P_i$ for some directed system $\{P_i\}$ of (finitely generated) projectives $P_i$. So it follows again from Theorem~\ref{them-duality pair purity}(1) that $\class{M}$ contains all flat modules. 

We now show that $\class{C}$ contains all injective modules. We first note that since $\class{M}$ contains all free modules we have $\prod_{i\in I} R^+ \in \class{C}$ since $$\prod_{i\in I} \Hom_{\Z}(R,\Q) \cong \Hom_{\Z}(\bigoplus_{i\in I}R,\Q) = [\bigoplus_{i\in I}R]^+.$$ But $R^+$ is an injective cogenerator for $R$-Mod; for example, see~\cite[Def.~3.2.7]{enochs-jenda-book} or~\cite[Prop.~I.9.3]{stenstrom}. It means any injective module must be a direct summand of a module of the form $\prod_{i\in I} R^+$; for example, see~\cite[Prop.~I.6.6]{stenstrom}. So any injective must also be in $\class{C}$.
\end{proof}

The next definition essentially encapsulates the notion of duality pair used in~\cite{bravo-gillespie-hovey}.

\begin{definition}~\cite[Appendix~A]{bravo-gillespie-hovey}\label{def-complete duality pair}
By a \textbf{symmetric duality pair} over $R$ we mean a pair of classes $\{\class{L},\class{A}\}$ for which both $(\class{L},\class{A})$ and $(\class{A},\class{L})$ are duality pairs. By a \textbf{complete duality pair} $(\class{L},\class{A})$ we mean that $\{\class{L},\class{A}\}$ is a symmetric duality pair with $(\class{L},\class{A})$ being a perfect duality pair. In this case, because of Proposition~\ref{prop-flats}, we will call $\class{L}$ the \textbf{projective class} and $\class{A}$ the \textbf{injective class}. 
\end{definition}

Now the following is the main result concerning symmetric duality pairs.

\begin{theorem}\cite[Theorem~A.6]{bravo-gillespie-hovey}\label{them-projectivecomplexes}
Let $\{\class{L}, \class{A}\}$ be a symmetric duality pair and let $P$ denote a chain complex of projective $R$-modules. Then the following hold:
\begin{enumerate}
\item $\Hom_R(A,P)$ is exact for all $A \in \class{A}$ if and only if $\Hom_R(P,L)$ is exact for all $L \in \class{L}$.
\item $\Hom_R(L,P)$ is exact for all $L \in \class{L}$ if and only if $\Hom_R(P,A)$ is exact for all $A \in \class{A}$.
\end{enumerate}  
\end{theorem}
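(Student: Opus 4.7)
The proof rests on the character module functor $(-)^+ = \Hom_{\Z}(-, \Q)$, which is exact and reflects exactness, together with the standard adjunction
$$\Hom_R(X, Y^+) \cong (X \otimes_R Y)^+$$
applied degreewise to chain complexes. Because $\{\class{L}, \class{A}\}$ is a symmetric duality pair, we have the crucial swap $L \in \class{L} \iff L^+ \in \class{A}$ and $A \in \class{A} \iff A^+ \in \class{L}$, which together with the adjunction lets us convert between a Hom-condition involving one class and a tensor-condition involving the other.

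My plan for (1) is to show both conditions are equivalent to the intermediate tensor condition $(\ast)$: ``$P \otimes_R A$ is exact for every $A \in \class{A}$.'' The implication $\Hom_R(P,L) \text{ exact } \forall L \in \class{L} \Rightarrow (\ast)$ is a one-line consequence of adjunction: for $A \in \class{A}$ we have $A^+ \in \class{L}$, so $\Hom_R(P, A^+) \cong (P \otimes_R A)^+$ is exact, and $P \otimes_R A$ is exact by faithfulness of $(-)^+$. For the converse, fix $L \in \class{L}$; then $L^+ \in \class{A}$, so $P \otimes_R L^+$ is exact, and hence $\Hom_R(P, L^{++}) \cong (P \otimes_R L^+)^+$ is exact. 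The hard step is to descend from $L^{++}$ to $L$. Here I would use the canonical pure embedding $L \hookrightarrow L^{++}$, whose cokernel $C = L^{++}/L$ again lies in $\class{L}$ by closure under pure quotients (Theorem~\ref{them-duality pair purity}(1)). Since each $P_n$ is projective, $\Hom_R(P_n, -)$ is exact, so applying $\Hom_R(P,-)$ to $0 \to L \to L^{++} \to C \to 0$ yields a short exact sequence of chain complexes; the long exact sequence in homology, combined with the description of the pure embedding as a filtered colimit of split short exact sequences, closes the descent.

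The equivalence between $\Hom_R(A,P) \text{ exact } \forall A \in \class{A}$ and $(\ast)$ is the principal technical obstacle, as one cannot simply swap $A$ for $A^+$: the character module $A^+$ lies in $\class{L}$ rather than in $\class{A}$. The approach is to exploit that $P^+$ is a complex of injectives (each $P_n^+$ being injective as a direct summand of a product of copies of the injective cogenerator $R^+$, by the argument used in Proposition~\ref{prop-flats}), and to relate $\Hom_R(A, P)$ to $P \otimes_R A$ through the double-dual embedding $A \hookrightarrow A^{++} \cong (A^+)^+$ together with the same adjunction and purity machinery as above. Part (2) of the theorem then follows by interchanging the roles of $\class{L}$ and $\class{A}$ throughout and running an entirely parallel argument with the intermediate condition $P \otimes_R L$ exact for all $L \in \class{L}$; the symmetry of $\{\class{L}, \class{A}\}$ is exactly what guarantees that every step remains valid under the interchange.
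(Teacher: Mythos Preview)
The present paper does not supply its own proof of this statement; it is quoted as \cite[Theorem~A.6]{bravo-gillespie-hovey} and used as a black box, so there is no in-paper argument against which to compare your proposal.

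On the substance of your sketch: the overall architecture---reducing both $\Hom$ conditions to the tensor condition $(\ast)$ via the adjunction $(X\otimes_R Y)^+\cong\Hom_R(X,Y^+)$ together with the swap $L\in\class{L}\Leftrightarrow L^+\in\class{A}$---is the right one, and the ``one-line'' implications you indicate are correct. The genuine gap is the descent from $\Hom_R(P,L^{++})$ to $\Hom_R(P,L)$. Your appeal to realising the pure embedding $L\hookrightarrow L^{++}$ as a filtered colimit of split monomorphisms does not do the job: $\Hom_R(P_n,-)$ fails to commute with filtered colimits as soon as $P_n$ is an infinitely generated projective, so exactness cannot be transported along that colimit. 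The long exact sequence attached to $0\to L\to L^{++}\to C\to 0$ (which is legitimate, since each $P_n$ is projective) only yields $H_n\Hom_R(P,L)\cong H_{n+1}\Hom_R(P,C)$; since $C$ again lies in $\class{L}$ this shifts the problem by one degree rather than resolving it, and iterating produces an infinite regress. The same descent obstruction recurs in your treatment of $\Hom_R(A,P)\Leftrightarrow(\ast)$, which you flag as the ``principal technical obstacle'' but leave essentially unargued.

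What is missing is an honest reduction of the quantifier ``for all $L\in\class{L}$'' to a subclass on which the double-dual embedding already splits. The relevant observation is that every character module $M^+$ is pure-injective, so $L\to L^{++}$ is a split monomorphism whenever $L$ itself is pure-injective; moreover every pure-injective $L\in\class{L}$ is then a direct summand of $(L^+)^+$ with $L^+\in\class{A}$. This identifies the pure-injective members of $\class{L}$ with retracts of modules of the form $A^+$, and your adjunction argument handles those cleanly. But you still owe an argument that testing $\Hom_R(P,-)$-exactness on pure-injective $L\in\class{L}$ suffices, and nothing in your sketch supplies it.
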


\subsection{Examples of complete duality pairs}\label{sec-example duality pairs}
Serveral classes of examples of duality pairs are given throughout~\cite{holm-jorgensen-duality, bravo-gillespie-hovey, bravo-perez}. We summarize here all of those that are complete duality pairs.

\begin{example}\label{example-level}
The canonical example of a complete duality pair is the level duality pair, $(\class{L},\class{A})$, given in~\cite{bravo-gillespie-hovey}. It exists over any ring $R$. To describe it, a module $F$ is said to be of \emph{type $FP_{\infty}$} if it has a projective resolution 
$$\cdots \xrightarrow{} P_2  \xrightarrow{} P_1  \xrightarrow{} P_0  \xrightarrow{} F  \xrightarrow{} 0$$ with each $P_i$ finitely generated. Then we make the following definitions. 
We then call an $R$-module $A$ \textbf{absolutely clean} if $\Ext^1_{R}(F,A)=0$ for all $R$-modules $F$ of type $FP_{\infty}$. On the other hand, we call an $R$-module $L$ \textbf{level} if $\Tor_1^R(F,L) = 0$ for all $R$-modules $F$ of type $FP_{\infty}$.
Letting $\class{L}$ denote the class of all level modules, and $\class{A}$ the class of all absolutely clean modules, we have a complete duality pair $(\class{L},\class{A})$ by~\cite[Section~2]{bravo-gillespie-hovey}. 

In fact, Bravo and P\'erez give a generalization of this in~\cite{bravo-perez}.
A module $F$ is said to be of \emph{type $FP_{n}$} (including possibly $n = \infty$) if it has a projective resolution 
$$P_n \xrightarrow{} \cdots \xrightarrow{} P_2  \xrightarrow{} P_1  \xrightarrow{} P_0  \xrightarrow{} F  \xrightarrow{} 0$$ with each $P_i$ finitely generated. Then an $R$-module $A$ is called \textbf{$\boldsymbol{\text{FP}_n}$-injective} if $\Ext^1_{R}(F,A)=0$ for all $R$-modules $F$ of type $FP_{n}$. On the other hand, we can $R$-module $L$ is called \textbf{$\boldsymbol{\text{FP}_n}$-flat} if $\Tor_1^R(F,L) = 0$ for all $R$-modules $F$ of type $FP_{n}$.
Let $\class{FP}_n\text{-Flat}$ denote the class of all $\text{FP}_n$-flat modules, and $\class{FP}_n\text{-Inj}$ denote the class of all $\text{FP}_n$-injective modules. Then for all $n \geq 2$, we have a complete duality pair $(\class{FP}_n\text{-Flat},\class{FP}_n\text{-Inj})$ by~\cite[Cor.~3.7]{bravo-gillespie-hovey}.
\end{example}

The remaining sequence of examples come from~\cite{holm-jorgensen-duality}. Please see that paper for further detail, references, and unexplained terminology. 

\begin{example}\label{example-auslander}
Let $R$ be a commutative Noetherian ring with a semidualizing $R$-complex $C$. There is associated, the so-called \emph{auslander class}, denoted $\class{A}^C_0$, and \emph{Bass class}, denoted $\class{B}^C_0$. Then  $(\class{A}^C_0, \class{B}^C_0)$ is a complete duality pair by~\cite[Prop.~2.4]{holm-jorgensen-duality}. 
\end{example}

\begin{example}\label{example-C-dimension}
Let $R$ be a commutative Noetherian ring with a semidualizing $R$-module $C$. Holm and J\o rgensen introduced in~\cite{holm-jorgensen-C-G-flat dimensions} the \emph{$C$-Gorenstein flat dimension}, denoted $\textnormal{C-Gfd}_RM$, and the \emph{$C$-Gorenstein injective dimension}, denoted $\textnormal{C-Gid}_RM$, of an $R$-module $M$. They define the following two classes:
$$\class{GF}^C_n = \{\, M \in R\textnormal{-Mod} \,|\, \textnormal{C-Gfd}_RM \leq n  \,\} $$
$$\class{GI}^C_n = \{\, M \in R\textnormal{-Mod} \,|\, \textnormal{C-Gid}_RM \leq n  \,\}. $$
(For example, $\class{GF}^R_0$ is the class of all usual Gorenstein flat $R$-modules.) 
It is shown in~\cite[Prop.~2.6]{holm-jorgensen-duality} that each $(\class{GF}^C_n, \class{GI}^C_n)$ is a complete duality pair whenever the ring $R$ admits a dualizing complex. 
\end{example}

\begin{example}\label{example-width}
Let $(R, \mathfrak{m}, k)$ be a commutative Noetherian local ring. For a given $R$-module $M$, Foxby defined its \emph{depth} in~\cite{foxby-bounded flats}, and later Yassemi defined the dual \emph{width} in~\cite{yassemi-width}. They are defined as follows:
$$\textnormal{depth}_RM = \textnormal{inf}\{\, m \in \Z \,|\, \Ext^m_R(k,M) \neq 0 \,\} $$
$$\textnormal{width}_RM = \textnormal{inf}\{\, m \in \Z \,|\, \Tor_m^R(k,M) \neq 0 \,\}. $$
Now, for each $n \geq 0$, define the following classes:
$$\class{D}_n = \{\, M \in R\textnormal{-Mod} \,|\, \textnormal{depth}_RM \geq n \,\} $$
$$\class{W}_n = \{\, M \in R\textnormal{-Mod} \,|\, \textnormal{width}_RM \geq n \,\}. $$
It is shown in~\cite[Prop.~2.7]{holm-jorgensen-duality} that each $(\class{D}_n, \class{W}_n)$ satisfying $\textnormal{depth}R \geq n$ is a complete duality pair. 

\end{example}

\section{Duality pairs and acyclic model structures}\label{Sec-acyclic models}

In this section we show how a complete duality pair $(\class{L},\class{A})$ over $R$ induces two abelian model structures on $\ch$. One is an injective model structure and turns out to be Quillen equivalent to the Gorenstein $(\class{L},\class{A})$-injective model structure of Section~\ref{sec-Gorenstein}. The other is a projective model structure and is Quillen equivalent to the Gorenstein $(\class{L},\class{A})$-projective model structure. The main results here are
Theorems~\ref{theorem-injective model on complexes} and~\ref{theorem-projective model on complexes} and follow from work in~\cite{bravo-gillespie-hovey}.

\begin{corollary}\label{transfinite}
Suppose $\class{L}$ and $\class{A}$ form a symmetric duality pair over $R$. Then there exists a set (not just a proper class) $S \subseteq \class{L}$  such that each $L \in \class{L}$ is a transfinite extension of modules in $S$. Similarly there is a set $T \subseteq \class{A}$ such that each $A \in \class{A}$ is a transfinite extension of modules in $T$. 
\end{corollary}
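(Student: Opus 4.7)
My plan is to exploit Theorem~\ref{them-duality pair purity}(1), which tells us that $\class{L}$ is closed under pure submodules, pure quotients and pure extensions; the symmetric statement holds for $\class{A}$ because $(\class{A},\class{L})$ is also a duality pair. I will then show that any class of $R$-modules with these closure properties is filtered by a \emph{set} of small members, from which the corollary reads off immediately.

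Fix the cardinal $\kappa = |R| + \aleph_0$. The main input is the classical ``L\"owenheim--Skolem'' statement for purity in $R$-Mod: every subset of cardinality $\leq \kappa$ of an $R$-module is contained in a pure submodule of cardinality $\leq \kappa$. Given $L \in \class{L}$, I will build by transfinite recursion a continuous chain of pure submodules
\[
0 = L_0 \subseteq L_1 \subseteq \cdots \subseteq L_\alpha \subseteq \cdots
\]
with $L = \bigcup_\alpha L_\alpha$ and $|L_{\alpha+1}/L_\alpha| \leq \kappa$. At successor stages, if $L_\alpha \neq L$, pick an element $\bar x \in L/L_\alpha$, apply L\"owenheim--Skolem to produce a pure submodule $\bar N \subseteq L/L_\alpha$ of cardinality $\leq \kappa$ containing $\bar x$, and let $L_{\alpha+1}$ be its preimage in $L$. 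At limit ordinals take unions, which remain pure in $L$ since directed unions of pure submodules are pure.

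To verify each $L_\alpha \in \class{L}$ one uses closure under pure submodules. To see that $L_{\alpha+1}/L_\alpha \in \class{L}$, first note that $L_\alpha$ is pure in $L_{\alpha+1}$ by transitivity of purity (if $A \subseteq B \subseteq C$ with $A$ pure in $C$, then $A$ is pure in $B$); hence $L_{\alpha+1}/L_\alpha$ is a pure quotient of $L_{\alpha+1} \in \class{L}$, and closure under pure quotients applies. For the purity of $L_{\alpha+1}$ itself in $L$, I invoke the standard fact that if $A$ is pure in $C$ and $B/A$ is pure in $C/A$, then $B$ is pure in $C$. Letting $S$ be a set of representatives of the isomorphism classes of modules in $\class{L}$ of cardinality $\leq \kappa$ (a set, since there are at most $2^\kappa$ such iso classes), the chain exhibits $L$ as a transfinite extension of elements of $S$. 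The argument for $\class{A}$ is identical.

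The main obstacle is not conceptual but bookkeeping: correctly invoking the standard purity lemmas (L\"owenheim--Skolem for pure submodules, transitivity of purity, and closure of pure submodules under directed unions) to justify that the filtration can be built inside $L$ and that all successive quotients remain in $\class{L}$. Once those classical facts are in hand, the filtration exists and the corollary follows immediately; this is really an instance of the general principle that classes closed under pure subobjects are deconstructible.
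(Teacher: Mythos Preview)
Your argument is correct and follows essentially the same approach as the paper: both invoke Theorem~\ref{them-duality pair purity}(1) to obtain closure of $\class{L}$ and $\class{A}$ under pure submodules and pure quotients, and then deduce deconstructibility. The only difference is that the paper cites this last step as \cite[Proposition~2.8]{bravo-gillespie-hovey}, whereas you unpack that proposition directly via the L\"owenheim--Skolem filtration.
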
 

\begin{proof}
By Proposition~\ref{them-duality pair purity}(1), each class is closed under pure submodule and pure quotients. So the statement follows from~\cite[Propositioin~2.8]{bravo-gillespie-hovey}.
\end{proof}



Let us now recall some definitions from~\cite{bravo-gillespie-hovey}. 

\begin{definition}\label{def-acyclicity}
Given an $R$-module $M$, a chain complex $X$ of injective $R$-modules is called \textbf{$M$-acyclic} if $\Hom_R(M,X)$ is exact. In a similar way, given a class $\class{M}$ of $R$-modules, we will call $X$ \emph{$\class{M}$-acyclic} if it is $M$-acyclic for all $M \in \class{M}$. On the other hand, if $X$ is a chain complex of projective (or even flat) $R$-modules, we call it \textbf{$M$-acyclic} if $M \otimes_R X$ is exact; and similarly we define $\class{M}$-acyclic complexes of projectives for a class $\class{M}$. In any case, we note that we are not necessarily assuming that $X$ is itself exact. We will  specify this explicitly by using terminology such \emph{$X$ is exact $M$-acyclic}.
\end{definition}

Note that if the class $\class{M}$ contains $R$, then $\class{M}$-acyclic complexes are certainly always exact (acyclic in the usual sense). 

\begin{lemma}\label{test modules}
Let $\class{M}$ be a class of $R$-modules for which there exists a set (so again not just a class) $S \subseteq \class{M}$ such that each $M \in \class{M}$ is a transfinite extension of modules in $S$. Set $M' = \bigoplus_{N \in S} N$, and set $M = R \oplus M'$. Then the following hold for any chain complex $X$ of injective (resp. projective) $R$-modules. 
\begin{enumerate}
\item $X$ is $\class{M}$-acyclic if and only if it is $M'$-acyclic.
\item $X$ is exact $\class{M}$-acyclic if and only if it is $M$-acyclic.
\end{enumerate}
\end{lemma}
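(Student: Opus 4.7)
The plan is to reduce both parts to two basic ingredients: (i) compatibility of acyclicity with direct sums and direct summands, and (ii) an Eklof-style transfinite induction that propagates acyclicity from the generating set $S$ to all of $\class{M}$. For the ``only if'' direction of (1), since each $N \in S$ lies in $\class{M}$, the hypothesis gives that $\Hom_R(N,X)$ is exact for all $N \in S$ (respectively $N \otimes_R X$ is exact); then the identity $\Hom_R(\bigoplus_{N \in S} N, X) \cong \prod_{N \in S} \Hom_R(N,X)$ (respectively $(\bigoplus_{N \in S} N) \otimes_R X \cong \bigoplus_{N \in S} (N \otimes_R X)$), combined with the fact that products and coproducts of exact complexes of abelian groups are exact, yields $M'$-acyclicity of $X$. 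Conversely, $M'$-acyclicity passes down to $N$-acyclicity for every $N \in S$ because $N$ is a direct summand of $M'$; the task that remains is to bootstrap from $S$-acyclicity to $\class{M}$-acyclicity.

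The key step is this bootstrap, which I would carry out by transfinite induction along the filtration supplied by the hypothesis. Given any short exact sequence $0 \to A \to B \to C \to 0$ of $R$-modules and a complex $X$ of injectives, applying $\Hom_R(-, X_n)$ termwise produces a short exact sequence of complexes of abelian groups, since each $X_n$ is injective. The associated long exact sequence in homology gives the two-out-of-three property: if two of $\Hom_R(A,X)$, $\Hom_R(B,X)$, $\Hom_R(C,X)$ are exact, so is the third. For a continuous filtration $0 = M_0 \subseteq M_1 \subseteq \cdots$ of $M \in \class{M}$ with successive quotients in $S$, one verifies by induction on $\alpha$ that $\Hom_R(M_\alpha, X)$ is exact: successor stages use two-out-of-three, while limit stages use $\Hom_R(M_\alpha, X) \cong \invlim_{\beta < \alpha} \Hom_R(M_\beta, X)$ with transition maps surjective (again by injectivity of each $X_n$), so the Mittag-Leffler condition ensures that this inverse limit of exact complexes remains exact. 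The complex-of-projectives case runs dually: termwise tensoring with $X$ is exact by flatness of each $X_n$, so the two-out-of-three step goes through verbatim, and at limit stages one instead invokes the fact that transfinite filtered colimits of exact complexes of abelian groups are exact.

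Part (2) then follows quickly from (1) using $\Hom_R(R, X) \cong X$ and $R \otimes_R X \cong X$, so $R$-acyclicity of $X$ is just exactness of $X$. Writing $M = R \oplus M'$ and reusing the summand/coproduct argument from the first paragraph, $M$-acyclicity is equivalent to being simultaneously exact and $M'$-acyclic; by part (1), this is precisely the condition of being exact $\class{M}$-acyclic. The main obstacle I anticipate is the limit stage of the injective-case induction, where exactness of an inverse limit of complexes must be ensured; but the surjectivity of the transition maps, guaranteed by the injectivity of each $X_n$ applied to the inclusions $M_\beta \hookrightarrow M_\alpha$, makes the Mittag-Leffler machinery apply without further effort.
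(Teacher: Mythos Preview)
Your argument is correct and covers all the cases cleanly. The paper's proof takes a slightly different, more black-box route for the converse direction in the injective case: rather than running a transfinite induction directly on the filtration, it uses the identification
\[
H_{n-1}\Hom_R(M,X) \cong \Ext^{1}_{\ch}(S^{n}(M),X),
\]
valid because $X$ is a complex of injectives, and then invokes Eklof's lemma to conclude that this $\Ext$ group vanishes whenever $M$ is a transfinite extension of modules $N$ with $\Ext^{1}_{\ch}(S^{n}(N),X)=0$. Your two-out-of-three step at successors together with the limit-stage argument is, in effect, a reproof of Eklof's lemma specialized to this situation, so the two approaches are close cousins. The paper's version is shorter once the $\Ext$ identification and Eklof's lemma are available; yours is more self-contained and makes the role of injectivity of the $X_n$ transparent at each step. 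One small caution on terminology: the phrase ``Mittag--Leffler'' is most safely used for countable towers, whereas your filtration may have arbitrary ordinal length. What you actually use, and what does the job, is that the transition maps are \emph{surjective}; over an ordinal-indexed system this allows the compatible lifts to be built by transfinite recursion exactly as you describe, so the conclusion stands. For the projective case both you and the paper simply appeal to the fact that tensor commutes with the relevant filtered colimits, so there is no divergence there.
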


\begin{proof}
First let $X$ be a complex of injectives. If $X$ is $\class{M}$-acyclic, then in particular $\Hom_R(N,X)$ is exact for each $N \in S$, and hence $$\prod_{N \in S} \Hom_R(N,X) \cong \Hom_R(M',X)$$ is exact. This means $\class{M}$-acyclic implies $M'$-acyclic. And since $\Hom_R(R,X) \cong X$, we see similarly that $X$ exact $\class{M}$-acyclic implies $X$ is $M$-acyclic. 

Conversely, if $X$ is $M'$-acyclic, then the isomorphism above implies that $X$ is $N$-acyclic for each $N \in \class{S}$ (and that $X$ is itself exact in the case that $X$ is $M$-acyclic). It is left to show that $X$ is $M$-acyclic for any given $M \in \class{M}$. But since $X$ is a complex of injectives we have (as in the proof of~\cite[Theorem~4.1]{bravo-gillespie-hovey}) that
\[
H_{n-1}\Hom_R(M,X) \cong \Ext^{1}_{\ch} (S^{n}(M),X).
\] 
In particular, $X$ is $M$-acyclic if and only if $ \Ext^{1}_{\ch} (S^{n}(M),X) = 0$ for each $n$. But each $M \in \class{M}$ is a transfinite extension of modules in $S$. So the desired conclusion now follows from a well known result known as Eklof's lemma. 

The case with $X$ a complex of projectives is similar. In fact it is easier to prove because transfinite extensions are particular types of directed colimits and tensor products commutes with direct limits.  
\end{proof}

\begin{theorem}[$\class{A}$-acyclic injective model structures]\label{theorem-injective model on complexes}
Let $(\class{L},\class{A})$ be a complete duality pair over $R$. Then there is a cofibrantly generated abelian model structure on $\ch$ whose fibrant objects are the exact $\class{A}$-acyclic complexes of injectives. The model structure is injective in the sense that every object is cofibrant and the trivially fibrant objects are the categorically injective chain complexes. 

There is a similar model structure whose fibrant objects are the (not necessarily exact) $\class{A}$-acyclic complexes of injectives. Moreover, there is yet another similar model structure whose fibrant objects are the (necessarily) exact $\class{L}$-acyclic complexes of injectives.  
\end{theorem}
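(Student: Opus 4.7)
The plan is to apply Hovey's correspondence between abelian model structures and complete cotorsion pairs. For each of the three model structures asserted, one needs a Hovey triple $(\class{C},\class{W},\class{F})$ with $\class{C}=\ch$ (every object cofibrant) and $\class{W}\cap\class{F}$ equal to the categorically injective chain complexes. Since the cotorsion pair $(\ch,\text{Inj}(\ch))$ is already complete, Hovey's theorem reduces the task to constructing one complete cotorsion pair $(\class{W},\class{F})$ with the prescribed $\class{F}$, verifying that $\class{W}$ is thick, and identifying $\class{W}\cap\class{F}=\text{Inj}(\ch)$. Cofibrant generation will come from the fact that the cotorsion pair is cogenerated by a set.

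To build the cotorsion pair in part~(1), I would apply Corollary~\ref{transfinite} to extract a set $T\subseteq\class{A}$ with each $A\in\class{A}$ a transfinite extension of modules in $T$, and put $A'=\bigoplus_{N\in T}N$ and $A=R\oplus A'$. By Lemma~\ref{test modules}, the class of exact $\class{A}$-acyclic complexes of injectives coincides with the class of $A$-acyclic complexes of injectives. The identity $H_{n-1}\Hom_R(A,X)\cong\Ext^1_{\ch}(S^n(A),X)$ used in the proof of that lemma (valid since $X$ is a complex of injectives) then lets one describe $\class{F}$ as the right Ext-orthogonal to the set $\{S^n(A):n\in\Z\}$ together with any set cogenerating ``complex of injectives'' (for instance disk complexes $D^n(R/I)$ on a cogenerating family of cyclic modules). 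The small object argument produces the complete cofibrantly generated cotorsion pair $({}^\perp\class{F},\class{F})$. Cases~(2) and~(3) proceed identically, using $A'$ in place of $A$ for (2), and for (3) applying Corollary~\ref{transfinite} to $\class{L}$ to manufacture an analogous test module $L=R\oplus\bigoplus_{N\in S}N$.

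The main obstacle is to verify the Hovey triple conditions: thickness of $\class{W}={}^\perp\class{F}$ and the identification $\class{W}\cap\class{F}=\text{Inj}(\ch)$. The inclusion $\text{Inj}(\ch)\subseteq\class{W}\cap\class{F}$ is straightforward, since contractible complexes of injectives are exact and $\class{A}$-acyclic, and every short exact sequence out of such an object splits. For the reverse inclusion, assume $X\in\class{W}\cap\class{F}$; the goal is to show each cycle module $Z_nX$ is injective, so that the sequences $0\to Z_nX\to X_n\to Z_{n-1}X\to 0$ split and $X$ is contractible. This is exactly where the complete duality pair hypothesis is used in an essential way: the equivalence of acyclicity conditions provided by Theorem~\ref{them-projectivecomplexes} (and its character-module counterpart for complexes of injectives) is what propagates the combined information ``$X$ is a complex of injectives, exact, $\class{A}$-acyclic, and left-Ext-orthogonal to all of $\class{F}$'' into the required injectivity of the $Z_nX$. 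Thickness of $\class{W}$ is then a formal consequence of the Ext-characterization of $\class{F}$ together with its closure properties, along the lines of the corresponding arguments in~\cite{bravo-gillespie-hovey}; once one has these verifications, the three Hovey triples assemble into the three asserted cofibrantly generated model structures.
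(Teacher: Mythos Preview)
Your setup via Hovey's correspondence and the small object argument is sound, and your use of Corollary~\ref{transfinite} and Lemma~\ref{test modules} to reduce $\class{A}$-acyclicity to acyclicity with respect to a single test module is exactly what the paper does. The paper's own proof is then a one-line citation of~\cite[Theorem~4.1]{bravo-gillespie-hovey}, which, for \emph{any} single $R$-module $M$, produces the injective model structure on $\ch$ whose fibrant objects are the $M$-acyclic complexes of injectives. So the substantive work you are sketching is really a reconstruction of that cited theorem.

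The gap is in your identification of where the duality pair hypothesis enters. You claim that the verification of $\class{W}\cap\class{F}=\text{Inj}(\ch)$ ``is exactly where the complete duality pair hypothesis is used in an essential way'' and invoke Theorem~\ref{them-projectivecomplexes}. This is wrong on two counts. First, Theorem~\ref{them-projectivecomplexes} is a statement about complexes of \emph{projectives}; there is no ``character-module counterpart for complexes of injectives'' stated or proved in the paper, and you do not supply one. Second, and more importantly, \cite[Theorem~4.1]{bravo-gillespie-hovey} establishes the model structure, including $\class{W}\cap\class{F}=\text{Inj}(\ch)$ and thickness of $\class{W}$, for an \emph{arbitrary} module $M$ with no duality hypothesis whatsoever. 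The only role of the complete duality pair in Theorem~\ref{theorem-injective model on complexes} is the set-theoretic reduction: it guarantees (via Theorem~\ref{them-duality pair purity}(1) and hence Corollary~\ref{transfinite}) that $\class{A}$ and $\class{L}$ are each controlled by a set, so that $\class{A}$-acyclicity and $\class{L}$-acyclicity each collapse to $M$-acyclicity for a single $M$. Your sentence about Theorem~\ref{them-projectivecomplexes} ``propagating'' information into injectivity of the $Z_nX$ is not an argument, and in fact no such duality input is needed there. If you want to give a self-contained proof rather than cite~\cite[Theorem~4.1]{bravo-gillespie-hovey}, you must supply the actual argument from that reference for $\class{W}\cap\class{F}=\text{Inj}(\ch)$ and for thickness; the symmetric duality plays no part in it.
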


The first model structure is highlighted in the theorem because it is the important one that appears later as Quillen equivalent to a model structure on $R$-Mod. 

\begin{proof}
By Corollary~\ref{transfinite} we know that there exists a set $T \subseteq \class{A}$ so that each $A \in \class{A}$ is a transfinite extension of modules in $T$. So the $\class{A}$-acyclic model structures exist by~\cite[Theorem~4.1]{bravo-gillespie-hovey} combined with the above Lemma~\ref{test modules}. Of course we can do the same thing for a set $S \subseteq \class{L}$. But note $\class{L}$-acyclic complexes are necessarily exact since $R \in \class{L}$. 
\end{proof}

Work already done in~\cite{bravo-gillespie-hovey} also provides the dual model structures:

\begin{theorem}[$\class{A}$-acyclic projective model structures]\label{theorem-projective model on complexes}
Let $(\class{L},\class{A})$ be a complete duality pair over $R$. Then there is a cofibrantly generated abelian model structure on $\ch$ whose cofibrant objects are the exact $\class{A}$-acyclic complexes of projectives. The model structure is projective in the sense that every object is fibrant and the trivially cofibrant objects are the categorically projective chain complexes. 

There is a similar model structure whose cofibrant objects are the (not necessarily exact) $\class{A}$-acyclic complexes of projectives. Moreover, there is yet another similar model structure whose cofibrant objects are the (necessarily) exact $\class{L}$-acyclic complexes of projectives.  
\end{theorem}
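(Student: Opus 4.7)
The plan is to mirror the proof of Theorem~\ref{theorem-injective model on complexes} step-by-step, exchanging the injective/Hom side for the projective/tensor side. All the hard work should already be done in~\cite{bravo-gillespie-hovey}: Section~4 of that paper produces the injective model structures on $\ch$ from a single test module, and a parallel section establishes the same construction on the projective side (with $M$-acyclicity defined via $M \otimes_R -$). Once that projective analogue is in hand, the argument is purely formal.

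First I would invoke Corollary~\ref{transfinite} to obtain sets $T \subseteq \class{A}$ and $S \subseteq \class{L}$ such that every member of $\class{A}$ (resp.\ $\class{L}$) is a transfinite extension of modules in $T$ (resp.\ $S$). Next I would apply the projective case of Lemma~\ref{test modules}: setting $M' = \bigoplus_{N \in T} N$ and $M = R \oplus M'$, a complex $X$ of projectives is $\class{A}$-acyclic if and only if it is $M'$-acyclic, and is exact $\class{A}$-acyclic if and only if it is $M$-acyclic. The analogous identification holds with $S$ in place of $T$ for the $\class{L}$-side. This collapses each of the three acyclicity conditions in the statement to acyclicity with respect to a \emph{single} test module, which is what the machinery of~\cite{bravo-gillespie-hovey} requires as input.

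Having done this, I would feed each test module into the projective version of~\cite[Theorem~4.1]{bravo-gillespie-hovey}, which outputs a cofibrantly generated abelian model structure on $\ch$ whose cofibrant objects are the (exact) $M$-acyclic complexes of projectives, in which every object is fibrant and the trivially cofibrant objects are the categorically projective chain complexes. Applied to $M$ this yields the first advertised model structure (exact $\class{A}$-acyclic cofibrants); applied to $M'$ it yields the second (possibly inexact $\class{A}$-acyclic cofibrants); and applied to the analogous $R \oplus \bigoplus_{N \in S} N$ it yields the third. In the $\class{L}$-case, exactness is automatic: since $R \in \class{L}$, any $\class{L}$-acyclic complex $X$ satisfies $X \cong R \otimes_R X$ exact.

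The only step that could be viewed as an obstacle is confirming that~\cite{bravo-gillespie-hovey} genuinely contains the projective analogue of Theorem~4.1 in a usable form. If it is not stated as a standalone theorem there, one could instead derive it from the injective version by a character-module/duality argument, using that $(-)^+$ converts tensor-acyclicity of a complex of projectives into Hom-acyclicity of a complex of injectives; but the paper's prefatory remark ``Work already done in~\cite{bravo-gillespie-hovey} also provides the dual model structures'' suggests that the projective theorem is available off the shelf, so I would simply cite it and proceed.
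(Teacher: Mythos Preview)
Your proposal is correct and matches the paper's proof essentially line for line: invoke Corollary~\ref{transfinite}, reduce to a single test module via the projective case of Lemma~\ref{test modules}, and then cite the projective analogue in~\cite{bravo-gillespie-hovey}. The only refinement is that the projective result you anticipate is indeed stated off the shelf as~\cite[Theorem~6.1]{bravo-gillespie-hovey}, so your fallback character-module argument is unnecessary.
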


Again, it is the first model structure highlighted in the theorem that will be the interesting one in this paper. 

\begin{proof}
Again, by Corollary~\ref{transfinite} we know that there exists a set $T \subseteq \class{A}$ so that each $A \in \class{A}$ is a transfinite extension of modules in $T$. This time the $\class{A}$-acyclic model structures exist by~\cite[Theorem~6.1]{bravo-gillespie-hovey} combined with the above Lemma~\ref{test modules}. When we do the same thing for a set $S \subseteq \class{L}$ we get the exact $\class{L}$-acyclic model structure. We note that $\class{L}$-acyclic complexes of projectives are necessarily exact because $R \in \class{L}$ and $R \tensor_R X \cong X$. 
\end{proof}

\section{Gorenstein homological algebra relative to a duality pair}\label{sec-Gorenstein}
 
The goal of this Section is to show how Gorenstein homological algebra can be done with respect to a duality pair. It comes from generalizing the core idea behind the main results of~\cite{bravo-gillespie-hovey}. 

\begin{setup}
Throughout this section we let $(\class{L},\class{A})$ denote a fixed complete duality pair over $R$.
\end{setup}

See Definition~\ref{def-acyclicity} to recall our terminology for acyclic complexes of injectives and projectives. Also, given a class $\class{C}$ of $R$-modules, we define $\leftperp{\class{C}}$ to be the class of all $R$-modules $M$ such that $\Ext^1_R(M,C) = 0$ for all $C \in \class{C}$. The class $\rightperp{\class{C}}$ is defined similarly. 

\begin{definition}\label{def-G-inj}
An $R$-module $M$ is called \textbf{Gorenstein $(\class{L},\class{A})$-injective} if
$M=Z_{0}I$ for some exact $\class{A}$-acyclic complex of injectives $I$ .  We let $\class{GI}$ denote the class of all Gorenstein $(\class{L},\class{A})$-injective modules and set $\class{W} = \leftperp{\class{GI}}$.
\end{definition}

\begin{definition}\label{def-G-proj}
An $R$-module $M$ is called \textbf{Gorenstein $(\class{L},\class{A})$-projective} if
$M=Z_{0}P$ for some exact $\class{A}$-acyclic complex of projectives $P$. By Theorem~\ref{them-projectivecomplexes}, it is equivalent to require that the complex of projectives $P$ remains exact after applying $\Hom_R(-,L)$ for any $L \in \class{L}$. We let $\class{GP}$ denote the class of all Gorenstein $(\class{L},\class{A})$-projective modules and set $\class{V} = \rightperp{\class{GP}}$.    
\end{definition}

\begin{remark}
Let $(\class{L},\class{A})$ denote the level duality pair of Example~\ref{example-level}.
In the case that $R$ is Noetherian the class $\class{A}$ coincides with the class of injective modules. So Gorenstein $(\class{L},\class{A})$-injective coincides with the usual notion of Gorenstein injective. Similarly, the notion of Gorenstein $(\class{L},\class{A})$-flat given in Definition~\ref{def-G-flat} coincides with the usual notion of Gorenstein flat. If furthermore, all flat modules have finite projective dimension, for example $R$ commutative of finite Krull dimension, then one can argue that the Gorenstein $(\class{L},\class{A})$-projectives coincide with the usual Gorenstein projectives too. 
\end{remark}

We now focus on proving results concerning the Gorenstein $(\class{L},\class{A})$-injectives. In particular, our goal is to show that $(\class{W},\class{GI})$ is an injective cotorsion pair and that its associated model structure on $R$-Mod is Quillen equivalent to the exact $\class{A}$-acyclic model structure of Theorem~\ref{theorem-injective model on complexes}. We will first need some lemmas relating $\class{GI}$ and $\class{W}$ to the that model structure on $\ch$. We follow the approach from~\cite{bravo-gillespie-hovey} but with some different proofs.

\begin{lemma}\label{lem-spheres}
$W \in \class{W}$ if and only
if $S^{0}(W)$ is trivial in the exact $\class{A}$-acyclic model structure.
\end{lemma}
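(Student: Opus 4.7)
The plan is to identify the trivial objects in the exact $\class{A}$-acyclic injective model structure as the left orthogonal of its fibrant class, and then use a comparison isomorphism between $\Ext^1_{\ch}(S^0(W), -)$ and $\Ext^1_R(W, Z_0(-))$ to translate triviality of $S^0(W)$ into membership in $\class{W}$. Invoking the Hovey correspondence for Theorem~\ref{theorem-injective model on complexes}: that model structure is injective with fibrant class $\class{F}$ equal to the exact $\class{A}$-acyclic complexes of injectives, so its class of trivial objects is precisely $\leftperp{\class{F}}$. Hence $S^0(W)$ is trivial if and only if $\Ext^1_{\ch}(S^0(W), X) = 0$ for every $X \in \class{F}$.

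The key technical step is the comparison isomorphism
\[
\Ext^1_{\ch}(S^0(W), X) \cong \Ext^1_R(W, Z_0 X),
\]
valid for any exact complex $X$ and any $R$-module $W$. I would establish this by choosing a short exact sequence $0 \to K \to P \to W \to 0$ with $P$ projective, applying $\Hom_{\ch}(-, X)$ to the induced sequence $0 \to S^0(K) \to S^0(P) \to S^0(W) \to 0$ of stalk complexes, and comparing with the $\Hom_R(-, Z_0 X)$ sequence of the original presentation via the identification $\Hom_{\ch}(S^0(M), X) = \Hom_R(M, Z_0 X)$. This reduces matters to the vanishing $\Ext^1_{\ch}(S^0(P), X) = 0$ whenever $P$ is projective and $X$ is exact, which I would obtain from the disk-sphere short exact sequence $0 \to S^{-1}(P) \to D^0(P) \to S^0(P) \to 0$: since $D^0(P)$ is categorically projective when $P$ is projective, the long exact $\Ext^*_{\ch}(-,X)$ sequence identifies $\Ext^1_{\ch}(S^0(P), X)$ with the cokernel of the map $\Hom_R(P, X_0) \to \Hom_R(P, Z_{-1}X)$ induced by $d_0^X$, and this cokernel vanishes because $X_0 \twoheadrightarrow Z_{-1} X$ (by exactness of $X$) and $P$ is projective.

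Combining these, the comparison isomorphism converts the condition ``$\Ext^1_{\ch}(S^0(W), X) = 0$ for every $X \in \class{F}$'' into ``$\Ext^1_R(W, Z_0 X) = 0$ for every exact $\class{A}$-acyclic complex of injectives $X$'', which by Definition~\ref{def-G-inj} is exactly $\Ext^1_R(W, G) = 0$ for every $G \in \class{GI}$, i.e.\ $W \in \class{W}$. The main obstacle is the comparison isomorphism itself, and specifically the disk-sphere computation it rests on; everything else is bookkeeping based on the Hovey correspondence and the defining property of Gorenstein $(\class{L},\class{A})$-injective modules.
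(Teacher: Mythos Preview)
Your proposal is correct and follows essentially the same route as the paper: both arguments reduce the lemma to the isomorphism $\Ext^{1}_{\ch}(S^{0}(W),X)\cong\Ext^{1}_{R}(W,Z_{0}X)$ for exact $X$, together with the observation that in an injective model structure the trivial objects are exactly $\leftperp{\class{F}}$. The only difference is that the paper simply cites this isomorphism from \cite[Lemma~4.2]{gillespie-degreewise-model-strucs}, whereas you supply a self-contained proof via a projective presentation of $W$ and the disk--sphere sequence; your computation of $\Ext^{1}_{\ch}(S^{0}(P),X)=0$ for $P$ projective and $X$ exact is correct.
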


\begin{proof}
It follows from the fact that for any exact complex $X$, we have an isomorphism $\Ext^{1}_{\ch} (S^{0}(W), X) \cong \Ext^{1}_R(W, Z_{0}X)$; see~\cite[Lemma~4.2]{gillespie-degreewise-model-strucs}.
\end{proof}

\begin{lemma}\label{lem-cycles-of-W}
Suppose $X$ is a complex with $H_{i}X=0$
for $i<0$ and $X_{i} \in \class{A}$ for $i>0$.  Then $X$ is trivial in the
exact $\class{A}$-acyclic model structure if and only if $Z_{0}X \in \class{W}$.
\end{lemma}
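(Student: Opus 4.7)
The plan is to exploit the cotorsion-pair characterization of the trivial class $\class{W}_{ch}$ in the exact $\class{A}$-acyclic model structure: a complex is trivial if and only if $\Ext^{1}_{\ch}(-,Y)$ vanishes on every exact $\class{A}$-acyclic complex of injectives $Y$. Combined with Lemma~\ref{lem-spheres}, which handles the case $X=S^{0}(W)$, and the two-out-of-three property of $\class{W}_{ch}$ in short exact sequences, the lemma reduces to producing a short exact sequence $0\to S^{0}(Z_{0}X)\to X\to C\to 0$ in $\ch$ whose cokernel $C$ is always trivial under the stated hypotheses on $X$.

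The inclusion $Z_{0}X\hookrightarrow X_{0}$ furnishes a chain map $S^{0}(Z_{0}X)\hookrightarrow X$, and its cokernel $C$ splits canonically as $C=C^{>0}\oplus C^{\leq 0}$. Indeed, the induced differential $C_{1}=X_{1}\to C_{0}=X_{0}/Z_{0}X$ is zero because $d^{X}_{1}(X_{1})\subseteq Z_{0}X$, so the two truncations decouple as summands. Here $C^{>0}$ is the complex $\cdots\to X_{2}\to X_{1}\to 0$ with all nonzero terms in $\class{A}$, while $C^{\leq 0}$ consists of $X_{0}/Z_{0}X\cong B_{-1}X$ in degree $0$ (mapping monomorphically into $X_{-1}$) together with the original $X_{n}$ for $n<0$; this $C^{\leq 0}$ is exact by the hypothesis $H_{i}X=0$ for $i<0$.

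To show $C\in\class{W}_{ch}$ it suffices to check $\Ext^{1}_{\ch}(C^{>0},Y)=\Ext^{1}_{\ch}(C^{\leq 0},Y)=0$ for every exact $\class{A}$-acyclic complex of injectives $Y$. Because each $Y_{n}$ is injective, any extension of $C^{>0}$ or $C^{\leq 0}$ by $Y$ in $\ch$ is degreewise split, so its class is represented by a degree $-1$ chain map into $Y$ modulo chain homotopy; equivalently (since shifting preserves the fibrant class), vanishing of $\Ext^{1}_{\ch}$ reduces to showing that every chain map from $C^{>0}$ or $C^{\leq 0}$ into a fibrant $Y$ is null-homotopic. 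For $C^{>0}$, I would build the null-homotopy inductively from degree $1$ upward: the chain-map condition forces the candidate at each degree $n$ to land in $B_{n}Y=Z_{n}Y$ (using exactness of $Y$), and one must lift it through the surjection $Y_{n+1}\twoheadrightarrow B_{n}Y$ whose kernel $Z_{n+1}Y$ lies in $\class{GI}$. The lift exists because $\Ext^{1}_{R}(A,M)=0$ for every $A\in\class{A}$ and $M\in\class{GI}$; this key vanishing follows by applying $\Hom_{R}(A,-)$ to the short exact sequence $0\to M\to I_{0}\to Z_{-1}I\to 0$ (where $I$ is an exact $\class{A}$-acyclic complex of injectives with $Z_{0}I=M$) and using the exactness of $\Hom_{R}(A,I)$ to conclude that $\Hom_{R}(A,I_{0})\twoheadrightarrow\Hom_{R}(A,Z_{-1}I)$ is surjective. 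For $C^{\leq 0}$, the null-homotopy is instead constructed from degree $0$ downward using only the injectivity of each $Y_{n}$ together with the exactness of $C^{\leq 0}$ to verify that at each stage the candidate map factors through the appropriate cycle module.

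Once $C\in\class{W}_{ch}$ is established, the short exact sequence $0\to S^{0}(Z_{0}X)\to X\to C\to 0$ together with two-out-of-three yields that $X\in\class{W}_{ch}$ if and only if $S^{0}(Z_{0}X)\in\class{W}_{ch}$, which by Lemma~\ref{lem-spheres} is equivalent to $Z_{0}X\in\class{W}$. The main technical obstacle is the inductive null-homotopy construction for $C^{>0}$, which hinges on carefully tracking how the chain-map condition together with exactness of $Y$ keeps the candidate landing in the correct cycle so that the key vanishing $\Ext^{1}_{R}(\class{A},\class{GI})=0$ can be invoked at each step.
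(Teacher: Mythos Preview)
Your proposal is correct and follows essentially the same decomposition as the paper: both reduce to showing that the cokernel of $S^{0}(Z_{0}X)\hookrightarrow X$ is trivial, and both break that cokernel into a bounded-below complex with entries in $\class{A}$ and a bounded-above exact complex (the paper does this in two steps via the intermediate subcomplex $B=(\cdots\to X_{2}\to X_{1}\to Z_{0}X\to 0)$, but the pieces are the same).

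The genuine difference is in how triviality of these two pieces is verified. The paper appeals to the explicit cogenerating set $\{D^{n}(R/\mathfrak{a}),\,S^{n}(A_{i})\}$ for the cotorsion pair (imported from \cite[Theorem~4.1]{bravo-gillespie-hovey}) and observes that bounded-above exact complexes are transfinite extensions of disks, while bounded-below complexes of $\class{A}$-modules are transfinite extensions of $\class{A}$-spheres, so both are trivial by construction. You instead work directly from the characterization $\class{W}_{\mathrm{ch}}=\leftperp{\class{F}}$ and build null-homotopies by hand, using the key vanishing $\Ext^{1}_{R}(\class{A},\class{GI})=0$ (which you correctly derive from the definition of $\class{GI}$) to lift at each step for $C^{>0}$. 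Your route is more self-contained and avoids appealing to the internal structure of the proof of Theorem~\ref{theorem-injective model on complexes}; the paper's route is shorter once that structure is granted. Both are perfectly valid.
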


\begin{proof}
This is the analog of~\cite[Lemma~5.1]{bravo-gillespie-hovey}, but we sketch an alternate proof. First note that the proof of the exact $\class{A}$-acyclic model structure on $\ch$ reveals that the cotorsion pair is cogenerated by the set $\{D^n(R/\mathfrak{a}) , S^n(A_i) \}$ where $\mathfrak{a}$ ranges through all (left) ideals of $R$ and $A_i$ ranges through a set $T \subseteq \class{A}$ as in Corollary~\ref{transfinite}. So the trivial complexes are precisely retracts of transfinite extensions of such sphere and disk complexes. In particular it implies (i) any disk $D^n(M)$ on any module $M$ is trivial, and (ii) any sphere $S^n(A)$, where $A \in \class{A}$, is trivial. From (i) we infer that any bounded above exact complex is trivial, as any such complex is a transfinite extension of disks. From (ii) we infer that any bounded below complex of modules in $\class{A}$ is trivial, as any such complex is a transfinite extension of spheres $S^n(A)$.  Now the chain complex $X$ given has a subcomplex $B \subseteq X$, where $B$ is the bounded below complex $\cdots \xrightarrow{} X_2 \xrightarrow{} X_1 \xrightarrow{} Z_0X \xrightarrow{} 0$. Then note that $X/B$ is (isomorphic to) the complex $0 \xrightarrow{} Z_{-1}X \xrightarrow{} X_{-1} \xrightarrow{} X_{-2} \xrightarrow{} \cdots$, which is bounded above and exact and so is trivial in the exact $\class{A}$-acyclic model structure. Therefore, since the class of trivial complexes is thick, $X$ is trivial if and only if $B$ is trivial. But we have another subcomplex $S^0(Z_0X) \subseteq B$, whose quotient is a bounded below complex of modules in $\class{A}$. Thus $B$ (and hence $X$) is trivial if and only if  $S^0(Z_0X)$ is trivial. By Lemma~\ref{lem-spheres}, this happens if and only if $Z_0X \in \class{W}$.
\end{proof}

\begin{lemma}\label{lemma-retracts}
Again let $\class{GI}$ denote the class of Gorenstein $(\class{L},\class{A})$-injectives.
\begin{enumerate}
\item $\class{GI}$ is closed under products.
\item $\class{GI}$ is injectively resolving in the sense of~\cite[Definition~1.1]{holm}.
\item $\class{GI}$ is closed under retracts.
\end{enumerate}
\end{lemma}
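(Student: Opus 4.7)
For part (1), I verify directly that each defining property of being Gorenstein $(\class{L},\class{A})$-injective passes to arbitrary products. Given $M_i = Z_0 I_i$ with each $I_i$ an exact $\class{A}$-acyclic complex of injectives, put $I = \prod_i I_i$: each $I_n = \prod_i (I_i)_n$ is injective since a product of injectives is injective; the complex $I$ is exact because products in $R$-Mod are exact; and $\Hom_R(A,I) \cong \prod_i \Hom_R(A,I_i)$ is a product of exact complexes, hence exact. Since $Z_0$ is a kernel functor it commutes with products, so $\prod_i M_i = Z_0 I \in \class{GI}$.

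For part (2) I check the three requirements of~\cite[Definition~1.1]{holm}: containment of injectives, closure under extensions, and closure under cokernels of monomorphisms within $\class{GI}$. For the first, given an injective $E$, the contractible complex $\cdots \to 0 \to E \xrightarrow{1} E \to 0 \to \cdots$ (with $E$ placed in degrees $1$ and $0$) is exact with injective terms, remains contractible after applying $\Hom_R(A,-)$, and has $Z_0 = E$, so $E \in \class{GI}$. For the two extension-type closures, I plan to use a horseshoe-style splicing of complete resolutions: given a short exact sequence $0 \to M' \to M \to M'' \to 0$ with $M' \in \class{GI}$, and exact $\class{A}$-acyclic complexes of injectives realizing two of the three modules, construct a degreewise-split short exact sequence $0 \to X' \to X \to X'' \to 0$ of complexes of injectives whose $Z_0$ sequence recovers the given SES. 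The degreewise-splitness then yields both exactness of the third complex (via the long exact homology sequence) and its $\class{A}$-acyclicity (since $\Hom_R(A,-)$ preserves degreewise-split SES and a SES of exact complexes has exact middle).

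For part (3), I invoke an Eilenberg swindle using (1) together with the cokernel-of-monomorphism part of (2). Suppose $M$ is a retract of $N \in \class{GI}$, so $N \cong M \oplus M'$. Let $E = \prod_{n \in \N} N$, which lies in $\class{GI}$ by (1). Since $E \cong \prod_{n \in \N} M \oplus \prod_{n \in \N} M'$ and the countable product satisfies $\prod_{n \in \N} M \cong M \oplus \prod_{n \in \N} M$, we obtain $E \cong M \oplus E$; in particular $M \oplus E \in \class{GI}$. The split short exact sequence $0 \to E \to M \oplus E \to M \to 0$ now has its first two terms in $\class{GI}$, and the cokernel-of-monomorphism closure from (2) forces $M \in \class{GI}$.

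The main obstacle I anticipate is the cokernel-of-monomorphism direction in (2): given exact $\class{A}$-acyclic complexes of injectives $X'$ for $M'$ and $X$ for $M$, producing one for $M''$ is subtle because the terms of $X$ are injective rather than projective, so the usual lifting-through-epimorphisms comparison theorem is unavailable in the positive (resolution) direction. One therefore has to either build the positive and negative halves of the resolution separately and glue them at $M''$, or appeal to a dimension-shifting Ext argument---and in either approach the preservation of $\class{A}$-acyclicity of the fabricated complex must be verified explicitly.
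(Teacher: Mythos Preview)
Your approach matches the paper's exactly: the paper proves (1) by the same direct product argument you give, deduces (2) by observing that the argument of Yang--Liu--Liang \cite[Theorem~2.7]{Ding projective} carries over verbatim to the present setting, and deduces (3) from (1) and (2) via Holm's Eilenberg swindle \cite[Proposition~1.4]{holm}, which is precisely the swindle you wrote out.

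The one place where your write-up is not yet a proof is the cokernel-of-monomorphism closure in (2), and you have correctly identified it as the crux. The horseshoe construction as you describe it (build a degreewise-split short exact sequence of complexes with prescribed outer terms) is the tool for the \emph{extension} case, not the cokernel case: given complete resolutions for $M'$ and $M''$ one builds the middle, but given resolutions for $M'$ and $M$ one cannot directly horseshoe to get $M''$. The argument that actually works---and is what Yang--Liu--Liang carry out---is a pushout/dimension-shift: embed $M'$ into an injective $I'$ with Gorenstein $(\class{L},\class{A})$-injective cokernel $L'$, push out along $M' \to M$ to obtain a module $P$ sitting in short exact sequences $0 \to M \to P \to L' \to 0$ and $0 \to I' \to P \to M'' \to 0$; the first sequence and extension closure give $P \in \class{GI}$, and since $I'$ is injective the second sequence splits, exhibiting $M''$ as a summand of $P$. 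One then feeds this back into the retract closure (3), which depends only on product closure and the cokernel closure just established, so there is no circularity. Filling in this pushout step would complete your argument; everything else you wrote is correct.
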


\begin{proof}
For (1), it is easy to see from the definition that $\class{GI}$ is also closed under direct products.

For (2), we note that Yang, Liu, and Liang show in~\cite[Theorem~2.7]{Ding projective} that the class of all Ding injective $R$-modules is injectively resolving in the sense of~\cite[Definition~1.1]{holm}. It means that the class contains the injectives, is closed under extensions, and is closed under taking cokernels of monomorphisms. Although the proof they give is for ``Ding'' $R$-modules (and is the projective version), the elegant arguments hold in the same exact way to show $\class{GI}$ is injectively resolving.

Holm shows in~\cite[Proposition~1.4]{holm} how an Eilenberg swindle argument can be used to conclude (3) from both (1) and (2).
\end{proof}

\begin{theorem}\label{thm-Gor-module}
There is an abelian model structure on $R$-Mod, the \textbf{Gorenstein $(\class{L},\class{A})$-injective model structure}, in which every object is cofibrant and the fibrant objects are the Gorenstein $(\class{L},\class{A})$-injectives.
\end{theorem}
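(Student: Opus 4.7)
The plan is to apply Hovey's correspondence between abelian model structures and compatible cotorsion pairs. Since every object is to be cofibrant, the entire model structure will be encoded by a single complete hereditary cotorsion pair $(\class{W},\class{GI})$ in $R$-Mod such that $\class{W}$ is thick and $\class{W}\cap\class{GI}$ equals the class of categorically injective modules. Most of this machinery is already prepared by the preceding lemmas. Lemma~\ref{lemma-retracts}(2) gives that $\class{GI}$ is injectively resolving (contains injectives, closed under extensions and cokernels of monomorphisms with both ends in $\class{GI}$), and Lemma~\ref{lemma-retracts}(3) gives closure under retracts; these together force the cotorsion pair, once we have it, to be hereditary, force $\class{W}$ to be thick, and guarantee $\rightperp{\class{W}}=\class{GI}$ as soon as we have enough approximations. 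For the intersection $\class{W}\cap\class{GI}$: any injective clearly lies in both classes. Conversely, suppose $M\in\class{W}\cap\class{GI}$. Writing $M=Z_{0}I$ for some exact $\class{A}$-acyclic complex $I$ of injectives, exactness produces a short exact sequence $0\to Z_{1}I\to I_{1}\to M\to 0$ in which $I_{1}$ is injective and $Z_{1}I\in\class{GI}$ (by truncation of $I$). Since $M\in\class{W}=\leftperp{\class{GI}}$, $\Ext^{1}_{R}(M,Z_{1}I)=0$ and so the sequence splits, exhibiting $M$ as a direct summand of an injective.

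The main task is thus the completeness of $(\class{W},\class{GI})$. My strategy is to transfer it from the cofibrantly generated exact $\class{A}$-acyclic injective model structure on $\ch$ (Theorem~\ref{theorem-injective model on complexes}) via the embedding $S^{0}\colon R\text{-Mod}\to\ch$ and the cycle functor $Z_{0}\colon\ch\to R\text{-Mod}$. Given $M\in R$-Mod, factor the morphism $S^{0}(M)\to 0$ in $\ch$ as a trivial cofibration followed by a fibration, say $S^{0}(M)\rightarrowtail Y\twoheadrightarrow 0$. Then $Y$ is fibrant, i.e.\ an exact $\class{A}$-acyclic complex of injectives, and the cokernel $Y/S^{0}(M)$ is trivial in the $\ch$ model structure. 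A diagram chase, using that $Y$ is exact and that $S^{0}(M)$ is concentrated in degree~$0$, extracts from this a short exact sequence $0\to M\to Z_{0}Y\to W\to 0$ of $R$-modules with $Z_{0}Y\in\class{GI}$ by construction. That $W\in\class{W}$ is then read off from Lemma~\ref{lem-cycles-of-W} applied to $Y/S^{0}(M)$ (whose only nonvanishing cycle module is essentially $W$), or equivalently from Lemma~\ref{lem-spheres} combined with the $\Ext$-shift $\Ext^{1}_{\ch}(S^{0}(W),Y)\cong\Ext^{1}_{R}(W,Z_{0}Y)$ that holds when $Y$ is exact. This provides the special $\class{GI}$-preenvelope of $M$. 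The special $\class{W}$-precover is produced symmetrically, by factoring $S^{0}(M)\to 0$ as a cofibration followed by a trivial fibration and again passing through $Z_{0}$. Cofibrant generation descends from Corollary~\ref{transfinite}: the set-indexed generators $\{S^{n}(A_{i})\}\cup\{D^{n}(R/\mathfrak{a})\}$ of the $\ch$-cotorsion pair yield, via $S^{0}$ and $Z_{0}$, a set cogenerating $(\class{W},\class{GI})$ in $R$-Mod.

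The main obstacle is the bookkeeping in this transfer step: matching the short exact sequences obtained from the $\ch$-factorizations to genuine approximations of $M$ in $R$-Mod with the required membership in $\class{W}$ and $\class{GI}$, and pinning down an honest set of generators for the $R$-Mod cotorsion pair. Once completeness and the three structural properties are verified, Hovey's recognition theorem for abelian model structures formally produces the Gorenstein $(\class{L},\class{A})$-injective model structure as claimed.
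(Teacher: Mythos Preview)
Your approach is essentially the paper's: both obtain the special $\class{GI}$-preenvelope of $M$ by embedding $S^0(M)$ into a fibrant object of the exact $\class{A}$-acyclic injective model structure on $\ch$, then applying $Z_0$ and invoking Lemma~\ref{lem-cycles-of-W}; and both close the argument by showing $\rightperp{\class{W}}\subseteq\class{GI}$ via a splitting and Lemma~\ref{lemma-retracts}(3).

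Two points need tightening. First, factoring $S^{0}(M)\to 0$ as a cofibration followed by a trivial fibration does \emph{not} produce a special $\class{W}$-precover: in this injective model structure that factorization embeds $S^{0}(M)$ into a contractible complex of injectives, and after applying $Z_{0}$ you merely recover an injective envelope $0\to M\to I\to I/M\to 0$, not an epimorphism onto $M$. To get the precover you must instead factor $0\to S^{0}(M)$ as a trivial cofibration followed by a fibration, or---as the paper does---simply invoke Salce's pushout argument once enough injectives has been established. Second, the assertion that ``injectively resolving plus closure under retracts forces $\class{W}$ thick'' is not correct as stated: coresolving $\class{GI}$ gives heredity, hence that $\class{W}$ is closed under kernels of epimorphisms, but closure of $\class{W}$ under cokernels of monomorphisms does not follow from these hypotheses alone. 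It \emph{does} follow once you also use completeness together with $\class{W}\cap\class{GI}=\text{injectives}$ (for $G\in\class{GI}$ the special precover $0\to G'\to W\to G\to 0$ then has $W$ injective, which lets you extend maps $W_1\to G$ along $W_1\hookrightarrow W_2$), and you do verify that core condition, so your ingredients assemble correctly. The paper's route is cleaner, though: Lemma~\ref{lem-spheres} identifies $\class{W}$ with $\{W : S^{0}(W)\text{ is trivial in }\ch\}$, and thickness is then inherited directly from the thick class of trivial objects in the $\ch$ model structure.
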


\begin{proof}
Again, we have taken $\class{GI}$ to be the Gorenstein $(\class{L},\class{A})$-injective objects,
and define $\class{W}=\leftperp{\class{GI}}$.  Then
Lemma~\ref{lem-spheres} shows that $\class{W}$ is thick and contains
the injectives since $\class{A}$ contains the injectives.  Now for any object $M$, we have a short exact sequence
\[
0 \xrightarrow{} S^{0}M \xrightarrow{} I \xrightarrow{}X \xrightarrow{} 0
\]
in which $I$ is an exact $\class{A}$-acyclic complex of injectives and $X$ is
trivial in the exact $\class{A}$-acyclic model category.  By the snake lemma, we get a short exact sequence
\[
0 \xrightarrow{} M \xrightarrow{} Z_{0}I \xrightarrow{} Z_{0}X \xrightarrow{} 0
\]
Of course $Z_{0}I$ is Gorenstein $(\class{L},\class{A})$-injective by definition, but $Z_{0}X$
is in $\class{W}$ as well by Lemma~\ref{lem-cycles-of-W}, since
$X_{i} \in \class{A}$ is injective for all $i\neq 0$ and $H_{i}X=0$ for all $i\neq
1$.  So the purported cotorsion pair $(\class{W},\class{GI})$ has
enough injectives, if it is a
cotorsion pair. The standard pushout argument of Salce will then apply to show that the purported cotorsion pair also has enough projectives, and so is a complete cotorsion pair.

But of course it is left show that $\class{GI} \supseteq \rightperp{\class{W}}$,
so that we know $(\class{W},\class{GI})$ is actually a cotorsion pair.
So suppose $M \in \rightperp{\class{W}}$. We can now find a short exact
sequence
\[
0 \xrightarrow{} M \xrightarrow{} J \xrightarrow{} W \xrightarrow{} 0
\]
where $J$ is Gorenstein $(\class{L},\class{A})$-injective and $W \in \class{W}$.  By
assumption, this must split, and so $M$ is a retract of $J$, and so Gorenstein $(\class{L},\class{A})$-injectives too by Lemma~\ref{lemma-retracts}(3).
\end{proof}

Following the same exact method as from~\cite[Prop.~5.10]{bravo-gillespie-hovey}, one obtains the following corollary.

\begin{corollary}\label{cor-cogenerated}
The cotorsion pair $(\cat{W}, \cat{GI})$, where
$\cat{GI}$ is the class of Gorenstein $(\class{L},\class{A})$-injectives, is cogenerated by
a set. Thus the Gorenstein $(\class{L},\class{A})$-injective model structure is
cofibrantly generated.
\end{corollary}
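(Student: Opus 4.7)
The plan is to follow the template of~\cite[Prop.~5.10]{bravo-gillespie-hovey}: exhibit an explicit set $\mathcal{S} \subseteq \cat{W}$ of modules with $\mathcal{S}^{\perp} = \cat{GI}$. Once set-cogeneration is established for $(\cat{W},\cat{GI})$, cofibrant generation of the Gorenstein $(\cat{L},\cat{A})$-injective model structure follows from Hovey's general theorem relating cofibrantly generated abelian model structures to pairs of set-cogenerated complete cotorsion pairs (see~\cite{hovey,gillespie-hereditary-abelian-models}): the other constituent cotorsion pair $(R\text{-Mod},\text{Inj})$ is trivially cogenerated by the set $\{R/\mathfrak{a}\}$ of cyclic modules.

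For the candidate set I would take $\mathcal{S} = T$, where $T \subseteq \cat{A}$ is the set provided by Corollary~\ref{transfinite}. The inclusion $\mathcal{S} \subseteq \cat{W}$ is verified by dimension-shifting: if $G = Z_0 I$ for an exact $\cat{A}$-acyclic complex of injectives $I$, then by definition $\Hom_R(A, I)$ is exact for every $A \in T \subseteq \cat{A}$. Using the truncation $I_0 \to I_{-1} \to \cdots$ as an injective coresolution of $G$, this gives $\Ext^n_R(A, G) = 0$ for all $n \geq 1$, so in particular $A \in \cat{W}$.

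For the reverse inclusion $T^{\perp} \subseteq \cat{GI}$, given $N \in T^{\perp}$, Eklof's lemma combined with Corollary~\ref{transfinite} upgrades the hypothesis $\Ext_R^1(A, N) = 0$ for $A \in T$ to the same vanishing for all $A \in \cat{A}$. Then paralleling the construction of~\cite[Prop.~5.10]{bravo-gillespie-hovey}, I would produce an exact $\cat{A}$-acyclic complex of injectives with $Z_0 = N$ by building the right half (an injective coresolution) through iterated embedding and dimension-shifting, and the left half by lifting the problem to $\ch$ via the set-cogenerated cotorsion pair of Theorem~\ref{theorem-injective model on complexes} and extracting the required cycle modules.

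The main obstacle, exactly as in~\cite[Prop.~5.10]{bravo-gillespie-hovey}, is the left-half construction. The right half reduces to a clean $\Ext$-vanishing induction, but the left half requires a careful application of Eklof's lemma to the set $T$ to produce, at each stage, short exact sequences $0 \to K_{n+1} \to I_{n+1} \to K_n \to 0$ with $I_{n+1}$ injective and with $\Hom_R(A,-)$-exactness preserved for all $A \in \cat{A}$. This is the technical heart of the argument, and it transports essentially verbatim from the cited proposition.
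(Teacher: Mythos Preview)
Your proposed cogenerating set $\mathcal{S}=T$ does not work, and this is a genuine gap rather than a detail. By Eklof's lemma you correctly get $T^{\perp}=\class{A}^{\perp}$, but $\class{A}^{\perp}$ is in general strictly larger than $\class{GI}$. For a decisive instance, let $R$ be commutative Noetherian and take the level duality pair of Example~\ref{example-level}; then $\class{A}$ is exactly the class of injective modules, so $\class{A}^{\perp}=R\text{-Mod}$, whereas $\class{GI}$ is the class of ordinary Gorenstein injective modules---a proper subclass whenever not every $R$-module is Gorenstein injective. Thus $T^{\perp}\neq\class{GI}$ and $T$ cannot cogenerate $(\class{W},\class{GI})$.

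The failure is visible in both halves of your construction. For the left half, from $\Ext^{1}_{R}(A,N)=0$ alone there is no mechanism to produce a short exact sequence $0\to K_{1}\to I_{1}\to N\to 0$ with $I_{1}$ injective that remains exact under $\Hom_{R}(A,-)$: surjectivity of $\Hom_{R}(A,I_{1})\to\Hom_{R}(A,N)$ asks that every map $A\to N$ lift through $I_{1}$, which is not an $\Ext^{1}$-condition on $N$ at all. Lifting to $\ch$ via Theorem~\ref{theorem-injective model on complexes} only reproduces the enough-injectives sequence of Theorem~\ref{thm-Gor-module}, which exhibits $N$ as a \emph{submodule} of a Gorenstein $(\class{L},\class{A})$-injective with cokernel in $\class{W}$; to split that sequence you would need $N\in\class{W}^{\perp}$, not merely $N\in\class{A}^{\perp}$. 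Even the right half is suspect: $\class{A}$-acyclicity of an injective coresolution of $N$ requires $\Ext^{n}_{R}(A,N)=0$ for all $n\geq 1$, and your hypothesis only gives $n=1$ unless you additionally know $\class{A}$ is closed under syzygies. The method of~\cite[Prop.~5.10]{bravo-gillespie-hovey} does not recycle $T$ as the cogenerating set; one must produce a larger set inside $\class{W}$, and the transport is not as verbatim as you suggest.
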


Furthermore, the ``same'' proof as~\cite[Theorem~5.8]{bravo-gillespie-hovey} holds and gives the following corollary.

\begin{corollary}\label{cor-quillen equiv}
The functor $S^0(-) : R\text{-Mod} \xrightarrow{} \ch$ is a Quillen equivalence from the Gorensein $(\class{L},\class{A})$-injective model structure to the exact $\class{A}$-acyclic injective model structure. 
\end{corollary}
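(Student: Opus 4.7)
The plan is to apply Hovey's adjunction criterion to the pair $S^0 : \rmod \rightleftarrows \ch : Z_0$, where $S^0(M)$ places $M$ in degree $0$ and $Z_0$ is the $0$-cycles functor. First, I would verify that $(S^0,Z_0)$ is a Quillen adjunction. Since every object is cofibrant in both model structures, cofibrations are just monomorphisms, which $S^0$ visibly preserves. A trivial cofibration in the Gorenstein $(\class{L},\class{A})$-injective model is a monomorphism with cokernel $W \in \class{W}$, and its image under $S^0$ has cokernel $S^0(W)$, which Lemma~\ref{lem-spheres} identifies as trivial in the exact $\class{A}$-acyclic injective model on $\ch$. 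Hence $S^0$ preserves trivial cofibrations.

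Second, I would verify that the derived unit is a weak equivalence on every object. The proof of Theorem~\ref{thm-Gor-module} already produces, for any $M$, a short exact sequence
\[
0 \to S^0 M \to I \to Y \to 0
\]
in $\ch$ with $I$ an exact $\class{A}$-acyclic complex of injectives (fibrant) and $Y$ a trivial complex satisfying $Y_n$ injective for $n \neq 0$ and $H_n Y = 0$ for $n \neq 1$. Applying $Z_0$ gives a left-exact sequence $0 \to M \to Z_0 I \to Z_0 Y$; moreover, the final map is surjective since any cycle $y \in Z_0 Y$ lifts to some $i \in I_0$, and then $d_0(i)$ maps to $d_0(y) = 0$, hence lies in $\ker(I_{-1} \to Y_{-1}) = (S^0 M)_{-1} = 0$, so $i \in Z_0 I$. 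By Lemma~\ref{lem-cycles-of-W}, the quotient $Z_0 Y$ lies in $\class{W}$, so $M \to Z_0 I$ is a trivial cofibration in the Gorenstein $(\class{L},\class{A})$-injective model; since $S^0 M \hookrightarrow I$ is a fibrant replacement in $\ch$, this composite is precisely the derived unit at $M$.

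Third, by Hovey's criterion it remains to show that $Z_0$ reflects weak equivalences between fibrant objects: given $f : X \to Y$ of exact $\class{A}$-acyclic complexes of injectives with $Z_0 f$ a weak equivalence between the Gorenstein $(\class{L},\class{A})$-injectives $Z_0 X$ and $Z_0 Y$, one must deduce that $f$ itself is a weak equivalence in $\ch$. This is the main obstacle. It will proceed exactly as in~\cite[Theorem~5.8]{bravo-gillespie-hovey}: one invokes Gillespie's identification of the homotopy category of a hereditary abelian model structure with the stable category of fibrant objects modulo trivially fibrant ones (so $\text{Ho}(\ch)$ identifies with the fibrant complexes modulo categorically injective complexes, and $\text{Ho}(\rmod)$ identifies with $\class{GI}/\text{Inj}$), and then exhibits $Z_0$ as inducing an equivalence between these stable quotients, under which reflection of weak equivalences becomes reflection of isomorphisms. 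Since our complete duality pair $(\class{L},\class{A})$ abstractly satisfies all of the input properties used in that argument---in particular Lemma~\ref{lem-cycles-of-W} and the fibrant replacement just exploited---the argument transplants without change.
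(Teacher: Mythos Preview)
Your proposal is correct and follows essentially the same route as the paper, which simply states that the ``same'' proof as \cite[Theorem~5.8]{bravo-gillespie-hovey} carries over; you have merely unpacked the first two steps of that argument (the Quillen adjunction via Lemma~\ref{lem-spheres} and the derived unit via the fibrant replacement and Lemma~\ref{lem-cycles-of-W} already used in Theorem~\ref{thm-Gor-module}) while still deferring the reflection step to the same reference.
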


\begin{remark}
Each of the above results from Lemma~\ref{lem-spheres} through Corollary~\ref{cor-quillen equiv} has a corresponding projective version. The proofs are all similar; for some proofs one must switch to the second characterization of Gorenstein $(\class{L},\class{A})$-projective given in Definition~\ref{def-G-proj}.  
\end{remark}

We summarize the main results of the dual situation in the following theorem. 

\begin{theorem}\label{thm-Gor-proj-module}
There is a cofibrantly generated abelian model structure on $R$-Mod, the \textbf{Gorenstein $(\class{L},\class{A})$-projective model structure}, in which every object is fibrant and the cofibrant objects are the Gorenstein $(\class{L},\class{A})$-projectives.

The functor $S^0(-) : R\text{-Mod} \xrightarrow{} \ch$ is a (right) Quillen equivalence from the Gorensein $(\class{L},\class{A})$-projective model structure to the exact $\class{A}$-acyclic projective model structure. 
\end{theorem}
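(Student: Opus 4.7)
The plan is to systematically dualize the proofs from Lemma~\ref{lem-spheres} through Corollary~\ref{cor-quillen equiv}, replacing the exact $\class{A}$-acyclic \emph{injective} model structure on $\ch$ by the \emph{projective} version of Theorem~\ref{theorem-projective model on complexes}, and invoking the second characterization of $\class{GP}$ from Theorem~\ref{them-projectivecomplexes} (exactness of $\Hom_R(P,L)$ for all $L\in\class{L}$) wherever closure properties of $\class{GP}$ are needed. Several adjustments will be required, most notably the replacement of the cycle functor $Z_{0}$ by its left-adjoint counterpart $X\mapsto\cok(d_{1}^{X})$, the degree-zero cokernel functor.

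First I would establish three projective analogs. The analog of Lemma~\ref{lem-spheres} says $V\in\class{V}=\rightperp{\class{GP}}$ iff $S^{0}(V)$ is trivial in the exact $\class{A}$-acyclic projective model structure, via the Ext identity $\Ext^{1}_{\ch}(Y,S^{0}V)\cong\Ext^{1}_{R}(\cok(d_{1}^{Y}),V)$ for exact $Y$. The analog of Lemma~\ref{lem-cycles-of-W} says: for a complex $X$ with $H_{i}X=0$ for $i>0$ and $X_{i}\in\class{L}$ for $i<0$, triviality of $X$ is equivalent to $\cok(d_{1}^{X})\in\class{V}$; the argument mirrors the sketch given for Lemma~\ref{lem-cycles-of-W}, now using Corollary~\ref{transfinite} to produce a test set $S\subseteq\class{L}$, identifying the generators of the relevant cotorsion pair on $\ch$ as disks together with spheres on modules in $S$, and truncating $X$ appropriately. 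The analog of Lemma~\ref{lemma-retracts} asserts that $\class{GP}$ is closed under coproducts, is projectively resolving~\cite[Def.~1.1]{holm}, and is closed under retracts; these follow by the same formal arguments as in the injective case, with the resolving property handled via~\cite[Theorem~2.7]{Ding projective} applied to the $\Hom_R(-,L)$ characterization, and retract closure via~\cite[Proposition~1.4]{holm}.

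With these in hand, the model structure is built exactly as in Theorem~\ref{thm-Gor-module}: for any $M$, the enough-projectives property of the cotorsion pair on $\ch$ produces a short exact sequence $0\to W\to P\to S^{0}(M)\to 0$ with $P$ an exact $\class{A}$-acyclic complex of projectives and $W$ trivial. Applying the cokernel functor yields the $R$-module short exact sequence $0\to\cok(d_{1}^{W})\to\cok(d_{1}^{P})\to M\to 0$, with $\cok(d_{1}^{P})\cong Z_{-1}(P)\in\class{GP}$ (by shifting $P$ one degree) and $\cok(d_{1}^{W})\in\class{V}$ by the analog of Lemma~\ref{lem-cycles-of-W}. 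This provides enough projectives for the putative cotorsion pair $(\class{GP},\class{V})$; Salce's trick gives enough injectives, and the splitting-and-retract argument upgrades this to a genuine cotorsion pair. Hovey's correspondence then yields the model structure; cofibrant generation and the Quillen equivalence follow as in Corollaries~\ref{cor-cogenerated} and~\ref{cor-quillen equiv}, with $S^{0}$ now playing the role of the right Quillen functor and $X\mapsto\cok(d_{1}^{X})$ its left adjoint.

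The main obstacle is the projective version of Lemma~\ref{lem-cycles-of-W}: one must first recognize that the cokernel $\cok(d_{1}^{X})$ (rather than $Z_{0}X$) is the correct degree-zero invariant, and then carry out the truncation and generator-identification argument in the dualized degree-wise conventions ($H_{i}X=0$ for $i>0$ and $X_{i}\in\class{L}$ for $i<0$). Once that lemma is in place, every remaining step is a straightforward dualization of the injective proof.
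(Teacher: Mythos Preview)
Your overall plan matches the paper's own approach exactly: the paper simply asserts that Lemmas~\ref{lem-spheres}--\ref{lemma-retracts}, Theorem~\ref{thm-Gor-module}, and Corollaries~\ref{cor-cogenerated}--\ref{cor-quillen equiv} each have a projective analog with ``similar'' proofs, switching to the $\Hom_R(-,L)$ characterization of $\class{GP}$ where needed. Your identification of $\cok(d_1^{-})$ as the correct replacement for $Z_0$, and of $S^0$ as right (rather than left) Quillen, is exactly right.

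There is, however, one point where your sketch does not dualize as written. In the paper's proof of Lemma~\ref{lem-cycles-of-W}, the trivial class is the \emph{left} side of the cotorsion pair $(\class{W},\class{F})$, so it is described as retracts of transfinite extensions of the cogenerating set $\{D^n(R/\mathfrak{a}),S^n(A_i)\}$; that is what makes ``bounded above exact'' and ``bounded below in $\class{A}$'' automatically trivial. In the projective model structure the trivial class is the \emph{right} side of $(\class{C},\class{W})$, and right classes are not characterized by transfinite extensions of any generating set. So your phrase ``identifying the generators of the relevant cotorsion pair on $\ch$ as disks together with spheres on modules in $S$'' does not yield the conclusion you want: even after showing $D^n(M)$ and $S^n(L)$ lie in $\class{W}$, you cannot conclude that a countable transfinite extension of such objects lies in $\class{W}$.

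The fix is to verify directly that the two truncated pieces lie in $\rightperp{\class{C}}$. For any exact $\class{A}$-acyclic complex of projectives $P$ one has $\Ext^1_{\ch}(P,-)\cong H^1\homcomplex(P,-)$, and an acyclic-assembly argument shows $\homcomplex(P,K)$ is exact when $K$ is bounded below exact (exact columns, upper half-plane) and $\homcomplex(P,Q'')$ is exact when $Q''$ is bounded above with entries in $\class{L}$ (exact rows via the $\Hom_R(P,L)$ characterization, lower half-plane). With that in hand your truncation-and-thickness argument goes through unchanged.
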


\section{Gorenstein $(\class{L},\class{A})$-Flat modules}

We now look at the analog to Gorenstein flat, but again relative to a complete duality pair $(\class{L},\class{A})$.  
We construct a model structure which, when it exists, has the same trivial objects as those in Theorem~\ref{thm-Gor-proj-module}, but with cofibrant objects the Gorenstein $(\class{L},\class{A})$-flat modules. We again will work with the following set up. 

\begin{setup}
Throughout this section we let $(\class{L},\class{A})$ denote a fixed complete duality pair over $R$.
\end{setup}

Remembering yet again the conventions set in Definition~\ref{def-acyclicity}, we make the following definition. 

\begin{definition}\label{def-G-flat}
An $R$-module $M$ is called \textbf{Gorenstein $(\class{L},\class{A})$-flat} if
$M=Z_{0}F$ for some exact $\class{A}$-acyclic complex of flat modules $F$. We let $\class{GF}$ denote the class of all Gorenstein $(\class{L},\class{A})$-flat modules and denote $\class{GC} = \rightperp{\class{GF}}$. Modules in $\class{GC}$ are called \textbf{Gorenstein $(\class{L},\class{A})$-cotorsion} modules.
\end{definition}

We won't prove the following proposition here because it follows at once by a recent result of Estrada, Iacob, and P\'erez.

\begin{proposition}~\cite[Corollary~2.7]{estrada-iacob-perez-G-flat}\label{prop-G-flat}
Assume the class $\class{GF}$ is closed under extensions. Then $(\class{GF},\class{GC})$ is a complete hereditary cotorsion pair, and cogenerated by a set.  
\end{proposition}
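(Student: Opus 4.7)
The plan is to establish deconstructibility of $\class{GF}$ and then apply the Eklof--Trlifaj theorem to produce a complete cotorsion pair cogenerated by a set, closely mirroring the construction of $(\class{W},\class{GI})$ in Theorem~\ref{thm-Gor-module}.

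First, I would verify the needed closure properties of $\class{GF}$. Every flat module belongs to $\class{GF}$: given flat $M$, splice a flat resolution of $M$ with an exact sequence of flats built to the right of $M$ (obtained inductively from completeness of the flat cotorsion pair). The resulting complex is exact, and $A \otimes_R -$ preserves its exactness because each term is flat, so the complex is $\class{A}$-acyclic. Direct sums of Gorenstein $(\class{L},\class{A})$-flats are obviously again in $\class{GF}$. Direct limits lie in $\class{GF}$ because tensor products commute with filtered colimits and filtered colimits preserve exactness. Combining these observations with the standing hypothesis that $\class{GF}$ is closed under extensions yields closure of $\class{GF}$ under transfinite extensions.

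Second, I would prove deconstructibility: there exists a cardinal $\kappa$ and a set $\class{S} \subseteq \class{GF}$ of $\kappa$-presented modules such that each $M \in \class{GF}$ is a transfinite extension of modules in $\class{S}$. The strategy is to take $M = Z_0 F$ for an exact $\class{A}$-acyclic complex of flats $F$ and then apply a Hill/Kaplansky style filtration. By Kaplansky's theorem flat modules are filtered by $\aleph_1$-presented flats, and by Corollary~\ref{transfinite} the class $\class{A}$ is itself deconstructible, so $\class{A}$-acyclicity can be tested against a set of small modules. Combining these, one extracts a continuous transfinite filtration of $F$ by $\kappa$-presented exact $\class{A}$-acyclic subcomplexes of flats; passing to $Z_0$ yields the desired filtration of $M$ by $\kappa$-presented members of $\class{GF}$.

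Third, with $\class{S}$ in hand, Eklof--Trlifaj produces a complete cotorsion pair $(\leftperp{(\rightperp{\class{S}})},\rightperp{\class{S}})$ cogenerated by $\class{S}$, and the closure properties from Step~1 force $\leftperp{(\rightperp{\class{S}})} = \class{GF}$. Hence $(\class{GF},\class{GC})$ is a complete cotorsion pair cogenerated by a set. For hereditarity, one shows $\class{GF}$ is closed under kernels of epimorphisms between its members: given $0 \to K \to B \to C \to 0$ with $B, C \in \class{GF}$, present $B$ and $C$ via exact $\class{A}$-acyclic complexes of flats, lift $B \to C$ to a chain map, and use a horseshoe / mapping-cone splice to exhibit $K$ as the $Z_0$ of a new exact $\class{A}$-acyclic complex of flats. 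The main obstacle is the deconstructibility step: one must simultaneously track exactness and $\class{A}$-acyclicity of the ambient flat complex while filtering by small subcomplexes, which requires careful cardinality bookkeeping. This is precisely the technical heart of the work packaged in~\cite{estrada-iacob-perez-G-flat}, and is why the proposition is invoked rather than reproved here.
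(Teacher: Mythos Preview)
Your overall plan matches the paper's: the paper's entire proof is the single sentence ``Take the class $\class{A}$ from our complete duality pair $(\class{L},\class{A})$ and apply \cite[Cor.~2.7]{estrada-iacob-perez-G-flat}.'' You correctly recognize this at the end of your proposal, so there is no disagreement about the approach---you are simply unpacking what is being cited.

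That said, two steps in your sketch of the underlying argument are not quite right and would need repair if you actually carried them out. First, your justification that flat modules lie in $\class{GF}$ is off: the sentence ``$A \otimes_R -$ preserves its exactness because each term is flat'' is false in general---an exact complex of flats need not remain exact after tensoring with an arbitrary module. What makes it work for a flat $M$ is that one can use a contractible complex such as $\cdots \to 0 \to M \xrightarrow{1} M \to 0 \to \cdots$, or more generally a complex all of whose \emph{cycles} are flat; flatness of the terms alone is insufficient. Also, ``completeness of the flat cotorsion pair'' does not directly produce an exact sequence of flats to the right of $M$: the enough-injectives factorization gives $0 \to M \to C \to F \to 0$ with $C$ cotorsion and $F$ flat, but $C$ need not be flat. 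Second, your hereditarity argument via ``lift $B \to C$ to a chain map'' is problematic in the flat setting, since flat modules are not projective and such lifts are not available for free. The arguments in the literature (and in \cite{estrada-iacob-perez-G-flat}) proceed differently, typically via character-module duality with the injective side or via a direct resolving-class argument that does not require lifting.
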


\begin{proof}
Take the class $\class{A}$ from our complete duality pair $(\class{L},\class{A})$. Use it when applying ~\cite[Cor.~2.7]{estrada-iacob-perez-G-flat} to obtain the result.
\end{proof}

We also will need the following deep result of Amnon Neeman.

\begin{proposition}~\cite[Theorem~8.6]{neeman-flat}\label{lemma-Neeman}
Let $R$ be any ring, $\dwclass{P}$ (resp. $\dwclass{F}$) denote the class of all complexes of projectives (resp. flats), and $\tilclass{F}$ denote the class of all categorically flat complexes; that is, exact complexes with all flat cycles. Then $\dwclass{F} \cap \rightperp{(\dwclass{P})} = \tilclass{F}$.
\end{proposition}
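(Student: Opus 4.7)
The plan is to prove the two inclusions $\tilclass{F} \subseteq \dwclass{F} \cap \rightperp{(\dwclass{P})}$ and $\dwclass{F} \cap \rightperp{(\dwclass{P})} \subseteq \tilclass{F}$ separately, as the first is relatively formal while the second is the substance of the theorem.

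For the forward inclusion, let $X \in \tilclass{F}$. Then $X \in \dwclass{F}$ because each $X_n$ sits in a short exact sequence $0 \to Z_n X \to X_n \to Z_{n-1}X \to 0$ whose outer terms are flat. To show $\Ext^1_{\ch}(P, X) = 0$ for $P \in \dwclass{P}$, consider any short exact sequence $0 \to X \to Y \to P \to 0$ in $\ch$. Degreewise projectivity of $P$ makes it split at each level, so $Y = X \oplus P$ as graded modules and the extension is classified by a chain map $\varphi \colon P \to \Sigma^{-1}X$. I would exhibit a null-homotopy of $\varphi$ by exploiting that the Pontryagin dual $X^{+}$ is categorically injective, hence K-injective; translating via the standard adjunction $\homcomplex(C, X^{+}) \cong (C \tensor X)^{+}$ then upgrades this to $X$ itself being K-flat. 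The flatness of the cycles $Z_n X$ should then provide the lifting data needed to construct $s \colon P \to X$ degree-by-degree with $\varphi = d_X s + s\, d_P$.

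For the reverse inclusion, suppose $X \in \dwclass{F}$ with $\Ext^1_{\ch}(P, X) = 0$ for every $P \in \dwclass{P}$. Since the sphere $S^n(R)$ lies in $\dwclass{P}$ and one computes $\Ext^1_{\ch}(S^n(R), X) \cong H_{n-1}(X)$, exactness of $X$ is immediate. To show each cycle $Z_n X$ is flat, test against an arbitrary $R$-module $M$ with projective resolution $\cdots \to Q_1 \to Q_0 \twoheadrightarrow M \to 0$: assemble a complex $P \in \dwclass{P}$ of the form $\cdots \to Q_1 \to Q_0 \to 0 \to \cdots$ placed at the appropriate degree, so that $\Ext^1_{\ch}(P, X)$ records the desired Tor obstruction. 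Combined with the dimension-shift identities $\Tor_i^R(M, Z_n X) \cong \Tor_{i+1}^R(M, Z_{n-1}X)$ extracted from the short exact sequences $0 \to Z_j X \to X_j \to Z_{j-1}X \to 0$ (using flatness of the $X_j$'s), the $\Ext$-vanishing should force $\Tor_1^R(M, Z_n X) = 0$, so that $Z_n X$ is flat.

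The main obstacle is this last step: the complex $P$ must be chosen so that $\Ext^1_{\ch}(P, X)$ genuinely captures $\Tor_1^R(M, Z_n X)$, and the relevant identification entangles infinitely many degrees via the dimension-shift along the cycles of $X$. This delicate piece of homological bookkeeping—and the associated arguments involving purity and approximation by finitely generated projectives—is precisely the content of Neeman's~\cite[Theorem~8.6]{neeman-flat}, which the paper sensibly invokes as a black box rather than reproves.
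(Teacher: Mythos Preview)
The paper offers no proof of this proposition at all: it is stated with the citation to Neeman and the remark ``We also will need the following deep result of Amnon Neeman,'' and then used as a black box in the proof of Theorem~\ref{them-trivial-objects}. Your final paragraph already recognizes this, so in that sense your proposal is aligned with the paper---indeed it goes further, since you at least sketch an approach.

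That said, both halves of your sketch have genuine gaps worth naming. In the forward inclusion, passing to the Pontryagin dual $X^{+}$ and observing it is contractible (hence K-injective) is fine, but the adjunction $\homcomplex(C,X^{+}) \cong (C \tensor X)^{+}$ yields that $X$ is K-\emph{flat}, i.e.\ $-\tensor_R X$ preserves acyclicity. What you need is $\Ext^{1}_{\ch}(P,X)=0$, a Hom-side statement; K-flatness does not deliver this directly, and the phrase ``the flatness of the cycles $Z_nX$ should then provide the lifting data'' hides exactly the issue, since the na\"{\i}ve inductive construction of a null-homotopy has no base case when $P$ is unbounded. One clean route is to note that $X^{+}$ contractible forces $X$ to be a pure-acyclic complex of flats, and then invoke the characterization (also essentially due to Neeman and Emmanouil) that such complexes are filtered colimits of contractible perfect complexes; but this is already nontrivial.

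In the reverse inclusion, your proposed test complex $P = (\cdots \to Q_1 \to Q_0 \to 0)$ is a bounded-below complex of projectives, so it is DG-projective and $\Ext^{1}_{\ch}(P,X)$ computes a homotopy class of maps $P \to \Sigma X$, i.e.\ a Hom-type invariant. It does not ``record the desired Tor obstruction'' in any direct way; getting from the $\Ext$-vanishing hypothesis to $\Tor_1^R(M,Z_nX)=0$ is exactly the subtle step. Your dimension-shifting identities are correct but by themselves only move the problem along the complex without ever grounding it. You are right that this is where the real content of Neeman's theorem lies, and the paper is content to cite it.
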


Now let $(\class{F},\class{C})$ denote Enochs' \emph{flat cotorsoin pair}. So $\class{F}$ is the class of all flat modules and $\class{C} = \rightperp{\class{F}}$ is the class of \emph{cotorsion modules}. These will be the fibrant objects in the model structure presented in the next theorem.

\begin{theorem}\label{them-trivial-objects}
Assume the class $\class{GF}$ of Gorenstein $(\class{L},\class{A})$-flat modules is closed under extensions. Then $(\class{GF},\class{W},\class{C})$ is a cofibrantly generated Hovey triple and $\class{W} = \rightperp{\class{GP}}$ are precisely the same trivial objects as in the Gorenstein $(\class{L},\class{A})$-projective model structure of Theorem~\ref{thm-Gor-proj-module}.
\end{theorem}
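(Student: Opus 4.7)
The plan is to realize $(\class{GF},\class{W},\class{C})$ as a Hovey triple by gluing the two complete cotorsion pairs $(\class{F},\class{C})$ (Enochs' flat cotorsion pair) and $(\class{GF},\class{GC})$ (from Proposition~\ref{prop-G-flat}) along the thick class $\class{W} := \rightperp{\class{GP}}$ supplied by Theorem~\ref{thm-Gor-proj-module}. Since $\class{F} \subseteq \class{GF}$ and hence $\class{GC} \subseteq \class{C}$, by Hovey's correspondence it suffices to verify thickness of $\class{W}$ (automatic from Theorem~\ref{thm-Gor-proj-module}) together with the two identifications
\[
\class{GF} \cap \class{W} = \class{F}, \qquad \class{W} \cap \class{C} = \class{GC}.
\]
Cofibrant generation will then follow since both cotorsion pairs are cogenerated by sets.

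The $\supseteq$ inclusions of both identifications go together. Gorenstein $(\class{L},\class{A})$-projectives are Gorenstein $(\class{L},\class{A})$-flat by the remark after Definition~\ref{Defs}, and flat modules are Gorenstein flat (use a periodic complex of copies of the module), so $\class{F} \cup \class{GP} \subseteq \class{GF}$; applying $\rightperp{(-)}$ yields $\class{GC} \subseteq \class{W} \cap \class{C}$. On the other hand, Proposition~\ref{prop-flats} gives $\class{F} \subseteq \class{L}$, and Theorem~\ref{them-projectivecomplexes} tells us that any $N = Z_0 P \in \class{GP}$ has an $\class{L}$-acyclic defining complex of projectives $P$; truncating this yields $\Ext^1_R(N,L) = 0$ for every $L \in \class{L}$. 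Hence $\class{L} \subseteq \class{W}$, so $\class{F} \subseteq \class{GF} \cap \class{W}$.

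The heart of the proof is the two reverse inclusions. For $\class{W} \cap \class{C} \subseteq \class{GC}$, given $M \in \class{W} \cap \class{C}$ and $G \in \class{GF}$, I would extract from the defining complex of $G$ a short exact sequence $0 \to G' \to F_0 \to G \to 0$ with $F_0$ flat and $G' \in \class{GF}$ (using that $(\class{GF},\class{GC})$ is hereditary by Proposition~\ref{prop-G-flat}), dimension-shift $\Ext^1_R(G,M)$ using $\Ext^{\geq 1}_R(F_0,M) = 0$ (from $M \in \class{C}$ and heredity of the flat cotorsion pair), and close out using a Gorenstein projective approximation of $G$ from the cotorsion pair $(\class{GP},\class{W})$ combined with $M \in \class{W}$. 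For $\class{GF} \cap \class{W} \subseteq \class{F}$, the cleanest route is to lift $M \in \class{GF}$ to its defining exact $\class{A}$-acyclic complex of flats $F$ and apply Neeman's Proposition~\ref{lemma-Neeman}: the triviality of $M$ in the Gorenstein $(\class{L},\class{A})$-projective model, transported via the Quillen equivalence of Theorem~\ref{thm-Gor-proj-module} with the exact $\class{A}$-acyclic projective model on $\ch$, should translate (after some unpacking) into $F \in \rightperp{(\dwclass{P})}$, whence $F \in \tilclass{F}$ and so $M = Z_0 F$ is flat.

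The principal obstacle is the inclusion $\class{GF} \cap \class{W} \subseteq \class{F}$; this is the analog of the nontrivial Bravo-Gillespie-Hovey result that an AC-Gorenstein flat module trivial in the AC-Gorenstein projective model is actually flat, and it relies essentially on Neeman's theorem. The key technical point to sort out is the precise translation of the module-level triviality $M \in \rightperp{\class{GP}}$ into the complex-level orthogonality $F \in \rightperp{(\dwclass{P})}$; once this is in place, the conclusion follows immediately from Proposition~\ref{lemma-Neeman}, and the two reverse inclusions together complete the verification that $(\class{GF},\class{W},\class{C})$ is the desired cofibrantly generated Hovey triple with $\class{W} = \rightperp{\class{GP}}$.
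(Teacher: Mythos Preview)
Your overall architecture is right, and you have correctly located the hard step as $\class{GF} \cap \class{W} \subseteq \class{F}$. Two remarks, one minor and one substantive.

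First, the minor one: you set yourself more work than necessary by verifying both identifications. The paper instead invokes a general lemma (\cite[Lemma~2.3(1)]{gillespie-models-for-hocats-of-injectives}): once $\class{W}$ is thick, $\class{GC} \subseteq \class{W}$, and $\class{GF} \cap \class{W} = \class{F}$, the equality $\class{W} \cap \class{C} = \class{GC}$ comes for free. (Take $M \in \class{W} \cap \class{C}$, form an $(\class{GF},\class{GC})$-envelope $0 \to M \to G \to N \to 0$; thickness forces $N \in \class{GF} \cap \class{W} = \class{F}$, so the sequence splits since $M \in \class{C}$.) Your dimension-shifting sketch for this inclusion is vaguer than it needs to be.

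The substantive gap is in your plan for $\class{GF} \cap \class{W} \subseteq \class{F}$. You propose to deduce $F \in \rightperp{(\dwclass{P})}$ from $M = Z_0 F \in \class{W}$ via the Quillen equivalence of Theorem~\ref{thm-Gor-proj-module}. But that equivalence only tells you $S^0(M)$ is trivial in the exact $\class{A}$-acyclic projective model on $\ch$; it says nothing about $F$. Triviality of $F$ in that model would require all cycles $Z_n F$ to lie in $\class{W}$, not just $Z_0 F$, and membership in $\rightperp{(\dwclass{P})}$ is stronger still: by Neeman's theorem that condition is \emph{equivalent} to all $Z_n F$ being flat, which is exactly what you are trying to prove. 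So the proposed translation is circular.

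The paper's maneuver avoids this. Rather than showing $F$ itself lies in $\rightperp{(\dwclass{P})}$, one uses completeness of the cotorsion pair $(\dwclass{P}, \rightperp{(\dwclass{P})})$ to write
\[
0 \xrightarrow{} F \xrightarrow{} W \xrightarrow{} P \xrightarrow{} 0
\]
with $W \in \rightperp{(\dwclass{P})}$ and $P \in \dwclass{P}$. Since $F$ has flat terms, so does $W$, and Neeman's Proposition~\ref{lemma-Neeman} then gives $W \in \tilclass{F}$. Now $P$ is exact (two-out-of-three), and degreewise splitness makes $A \otimes_R P$ exact for all $A \in \class{A}$, so $Z_0 P \in \class{GP}$. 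Passing to cycles yields $0 \to M \to Z_0 W \to Z_0 P \to 0$, which splits because $M \in \rightperp{\class{GP}}$; hence $M$ is a summand of the flat module $Z_0 W$. The missing idea is to \emph{approximate} $F$ by a complex in $\rightperp{(\dwclass{P})}$ rather than to force $F$ into that class.
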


\begin{proof}
We have two complete hereditary cotorsion pairs $(\class{F},\class{C})$ and $(\class{GF},\class{GC})$, each cogenerated by a set. 
It is immediate from the definitions that $\class{GP} \subseteq \class{GF}$, whence $\class{GC} \subseteq \class{W}  = \rightperp{\class{GP}}$. Also, $\class{W}$ is already known to be thick by (the dual of) Lemma~\ref{lem-spheres}. So the theorem will follow immediately from~\cite[Lemma~2.3(1)]{gillespie-models-for-hocats-of-injectives} once we show $\class{GF} \cap \class{W} = \class{F}$, where $\class{F}$ is the class of all flat modules.

Now since $\class{L}$ contains the flat modules (Proposition~\ref{prop-flats}) it follows from the alternate characterization given in Definition~\ref{def-G-proj} that $\class{W} \supseteq \class{F}$. Hence we see that $\class{GF} \cap \class{W} \supseteq \class{F}$, so all that remains is to show
$\class{GF} \cap \class{W} \subseteq \class{F}$. 

So let $M \in \class{GF} \cap \rightperp{\class{GP}}$, and write it as $M = Z_0F$ where $F$ is an exact $\class{A}$-acyclic complex of flat modules; so $A \tensor_R F$ remains exact for all $A \in \class{A}$. From~\cite[Cor.~6.4]{bravo-gillespie-hovey} we have a complete cotorsion pair $(\dwclass{P}, \rightperp{(\dwclass{P})})$, where $\dwclass{P}$ is the class of all complexes of projectives. So we may write a short exact sequence $$0 \xrightarrow{} F  \xrightarrow{} W  \xrightarrow{} P  \xrightarrow{} 0 $$ with $W \in  \rightperp{(\dwclass{P})}$ and $P \in \dwclass{P}$. But then using Proposition~\ref{lemma-Neeman}, one easily argues that $W \in \tilclass{F}$, the class of all exact complexes with all cycle modules flat. Since $F$ and $W$ are each exact, we see that $P$ is exact too. Moreover,  the short exact sequence is split in each degree and so for any $A \in \class{A}$ we have a short exact sequence   $$0 \xrightarrow{} A \tensor_R F  \xrightarrow{} A \tensor_R W  \xrightarrow{} A \tensor_R P  \xrightarrow{} 0. $$ It is now clear that $A \tensor_R P$ is also exact, equivalently, $\Hom_R(P,L)$ is exact for all $L \in \class{L}$, and so $Z_0P$ is Gorenstein $(\class{L},\class{A})$-projective. Note that since each complex is exact we have a short exact sequence $0 \xrightarrow{} Z_0F  \xrightarrow{} Z_0W  \xrightarrow{} Z_0P  \xrightarrow{} 0$. By the hypothesis, $Z_0F = M \in \rightperp{\class{GP}}$, and so we conclude  that this sequence splits. Since $Z_0W$ is flat, so is the direct summand $Z_0F$, proving $\class{GF} \cap \rightperp{\class{GP}} \subseteq \class{F}$.
\end{proof}

\end{document}